\definecolor{commentscolor}{rgb}{0,0.6,0}
\definecolor{keywordscolor}{HTML}{0512FF}
\definecolor{stringscolor}{HTML}{AB354D}
\definecolor{backcolor}{HTML}{FFFFFF}
\definecolor{framecolor}{HTML}{CCCCCC}
\lstdefinestyle{mystyle}{
    frame=single,
    framesep=5pt,
    rulecolor=\color{backcolor},
    upquote=true,
    backgroundcolor=\color{backcolor},   
    commentstyle=\color{commentscolor},
    keywordstyle=\color{keywordscolor},
    stringstyle=\color{stringscolor},
    basicstyle=\ttfamily,
    breakatwhitespace=false,         
    breaklines=true,                 
    captionpos=b,                    
    keepspaces=true,                                    
    showspaces=false,                
    showstringspaces=false,
    showtabs=false,                  
    tabsize=4
}
\def\lst@outputspace{{\ifx\lst@bkgcolor\empty\color{white}\else\lst@bkgcolor\fi\lst@visiblespace}}
\definecolor{1}{HTML}{FF0303}
\definecolor{2}{HTML}{2323FF}
\definecolor{3}{HTML}{CB9154}
\definecolor{4}{HTML}{6AA84F}
\definecolor{5}{HTML}{BA54FF}
\definecolor{6}{HTML}{A9537E}
\newcommand{\1}{\color{1}1\color{black}}
\newcommand{\2}{\color{2}2\color{black}}
\newcommand{\3}{\color{3}3\color{black}}
\newcommand{\4}{\color{4}4\color{black}}
\newcommand{\5}{\color{5}5\color{black}}
\newcommand{\6}{\color{6}6\color{black}}
\newcommand{\s}{\hphantom{1}}
\def\thm@space@setup{
  \thm@preskip=4mm
  \thm@postskip=0mm
}
\DeclarePairedDelimiter\set{\{}{\}}
\theoremstyle{plain} 
\newtheorem{theorem}{Theorem}
\newtheorem{myquestion}{Question}
\newtheorem{corollary}[theorem]{Corollary}
\newtheorem{lemma}[theorem]{Lemma}
\newtheorem{proposition}[theorem]{Proposition}
\theoremstyle{remark} 
\newcommand{\lat}[1]{\mathcal{B}_{#1}}
\newcommand{\mlat}[1]{\mathcal{M}_{#1}}
\newcommand{\mn}[2]{\mathcal{M}_{#1, #2}}
\newcommand{\cgS}{\mathcal{S}}
\newcommand{\bdim}{\operatorname{bdim}}
\let\leq\leqslant
\let\geq\geqslant
\let\subset\subseteq
\let\epsilon\varepsilon
\let\old@setaddresses\@setaddresses 
\def\@setaddresses{\bigskip\bgroup\parindent 0pt\let\scshape\relax\old@setaddresses\egroup}
\begin{document} 
\title[Boolean dimension of a Boolean lattice] 
{Boolean dimension of a Boolean lattice}

\author[M.~Briański]{Marcin Briański}
\email{marcin.brianski@doctoral.uj.edu.pl}
\author[J.~Hodor]{J\c{e}drzej Hodor}
\email{jedrzej.hodor@gmail.com}
\author[H.~La]{Hoang La}
\email{hoang.la.research@gmail.com}
\author[P.~Micek]{Piotr Micek}
\email{piotr.micek@uj.edu.pl}
\author[K.~Michno]{Katzper Michno}
\email{katzper.michno@gmail.com}

\address[M.~Briański, J.~Hodor, H.~La, P. ~Micek, K. ~Michno]{Theoretical Computer Science Department\\ 
  Jagiellonian University\\
  Kraków, Poland}
  
\thanks{M.\ Briański, J.\ Hodor, H.\ La, and P.\ Micek are partially supported by a Polish National Science Center grant (BEETHOVEN; UMO-2018/31/G/ST1/03718).}

\subjclass[2010]{06A07} 
  
\keywords{Poset, dimension, Boolean dimension, Boolean lattice}

\begin{abstract}
    For every integer $n$ with $n \geq 6$, we prove that the Boolean dimension of a poset consisting of all the subsets of $\{1,\dots,n\}$ equipped with the inclusion relation is strictly less than $n$.
\end{abstract}

\maketitle

\section{Introduction}\label{sec:introduction}
The most widely studied measure of complexity of partially ordered sets (posets for short) is their dimension, 
introduced by Dushnik and Miller~\cite{DM41} in 1941.
Low-dimensional posets admit a concise procedure for handling comparability queries of the form "is $x\leq y$?".
In the 1980s, Gambosi, Nešetřil, Pudl\'{a}k, and Talamo \cite{GNT90, NP89} introduced the notion of Boolean dimension, which generalizes the notion of dimension with emphasis on the existence of the mentioned compact schemes.
See \cref{sec:definitions} for definitions of dimension and Boolean dimension.
For a poset $P$, we write $\dim(P)$ for the dimension of $P$ and $\bdim(P)$ for the Boolean dimension of~$P$.
The most prominent (and beautiful) open problem on Boolean dimension comes from the initial paper by Nešetřil and Pudl\'{a}k~\cite{NP89}: Do posets with planar cover graphs have bounded Boolean dimension?
For recent progress towards resolving this problem see \cite{FMM20, BMT22}.


In this paper, we consider the following question: 
What is the Boolean dimension of a Boolean lattice?
For a positive integer $n$, the \emph{Boolean lattice of order $n$}, denoted by $\lat{n}$, is the poset on all the subsets of $[n] = \{1,\dots,n\}$ ordered by the inclusion relation.
Although it is well-known and easy to see that $\dim(\lat{n}) = n$,  
the problem of determining the dimension of the union of two levels of $\lat{n}$ had been heavily studied in the 1990s -- see e.g.\ \cite{BKKT94, F94, K96, K97, K99, HKT00}.
Recently, this area of study was revisited due to an increasing interest in the divisibility orders -- see~\cite{H22, LS21, SV22}.


In the founding paper of the poset dimension theory, Dushnik and Miller introduced the family of posets $\{S_n : n \geq 2\}$, later referred to as standard examples \cite[Theorem~4.1]{DM41}.
The poset $S_n$ is isomorphic to the subposet of $\lat{n}$ induced by all singletons and all co-singletons -- see~\cref{fig:1}. 
The striking feature of the family of standard examples is that $\dim(S_n) = n$. 
On the other hand, the Boolean dimension of every standard example is at most $4$.
Since $\bdim(P)\leq\dim(P)$ for all posets $P$, 
we have $\bdim(\lat{n})\leq n$ for all positive integers $n$.
The family of Boolean lattices had appeared as a natural candidate to be the canonical example of a family with the property that $\bdim(\lat{n}) = n$.
The question, of whether this is true was circulating in the community since Order \& Geometry Workshop 2016 held in Poland. In writing, it appeared e.g.\ in \cite[Section~3.4]{BPSTT19} and \cite[page~6]{KT23}.
We answer the question in the negative, that is, we prove the following.

\begin{theorem}\label{th:upper_bound}
    For every integer $n$ with $n \geq 6$,
    $$\bdim(\lat{n}) < n.$$
\end{theorem}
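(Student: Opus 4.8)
The plan is to construct, for every $n\ge 6$, a family of at most $n-1$ linear orders on the $2^n$ subsets of $[n]$ together with a Boolean formula that jointly witness $\bdim(\lat n)\le n-1$. The benchmark is the classical fact $\dim(\lat n)=n$: there is a realizer $L_1,\dots,L_n$ by linear extensions with $A\subseteq B$ iff $A<_{L_i}B$ for every $i$, i.e.\ with the conjunction as decoder, and this cannot be improved within ordinary dimension because the critical pairs of $\lat n$ are exactly $(\{i\},[n]\setminus\{i\})$ for $i\in[n]$, they induce a copy of the standard example $S_n$, and no single linear extension can reverse two of them (a linear extension forcing $[n]\setminus\{i\}<\{i\}$ and $[n]\setminus\{j\}<\{j\}$ together with $\{i\}\subseteq[n]\setminus\{j\}$ and $\{j\}\subseteq[n]\setminus\{i\}$ would close a cycle). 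Boolean dimension is precisely the regime where this obstruction dissolves — recall from the introduction that every standard example has Boolean dimension at most $4$ — so the whole game is to use the freedom of an arbitrary Boolean decoder to do with $n-1$ linear orders what the conjunction needs $n$ linear extensions for.

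The first ingredient is a reduction in $n$. Using the decomposition $\lat n\cong\lat{n-1}\times C_2$, where $C_2$ is the two-element chain, split the subsets of $[n]$ according to whether they contain $n$; then $A\subseteq B$ is equivalent to "$A\setminus\{n\}\subseteq B\setminus\{n\}$ in $\lat{n-1}$" together with "not ($n\in A$ and $n\notin B$)". Starting from a Boolean realization of $\lat{n-1}$ with $d$ orders and a decoder $\sigma$, lift each of those orders to the subsets of $[n]$ by comparing traces on $[n-1]$ first and breaking each remaining tie $\{C,C\cup\{n\}\}$ by a fixed pattern chosen so that the resulting bit vector lies in $\sigma^{-1}(1)$; then adjoin one further linear order $N$ that places every set avoiding $n$ below every set containing $n$ and whose restrictions to the two halves are linear extensions of $\lat{n-1}$. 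A case analysis on whether $n$ lies in $A$ and in $B$ shows that the correct decoder for $\lat n$ is: output $0$ whenever the bit of $N$ is $0$, and otherwise output $\sigma$ applied to the lifted bits — the point is that $\chi_N(A,B)=0$ exactly in the situations where $A\not\subseteq B$ for a reason invisible to the lifted orders. This yields $\bdim(\lat n)\le\bdim(\lat{n-1})+1$, so it is enough to establish one base case, namely $\bdim(\lat 6)\le 5$, and then induct upward.

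The heart of the argument — and the step I expect to be the main obstacle — is that base case, where a conjunctive decoder provably fails and one must exhibit five concrete linear orders on the $64$-element ground set of $\lat 6$ (these need not be linear extensions) together with a genuinely nontrivial Boolean formula on five bits. The intended mechanism parallels the standard-example phenomenon: a few of the orders should carry enough "coordinate-profile" information to certify $A\subseteq B$ for the generic comparable pairs, while the decoder uses the remaining orders to detect and correctly adjudicate the $S_6$-type critical pairs $(\{i\},[6]\setminus\{i\})$ — which a conjunction gets wrong — in the spirit of the four-order realization of $S_6$, by interleaved orders that tell "same index" from "different index". The delicate point is that the orders pressed into double duty for the critical pairs must not flip the verdict on the many non-critical comparabilities, so correctness has to be verified globally over all ordered pairs of subsets of $[6]$; this is where an explicit (possibly computer-assisted) construction enters, and $n=6$ is the smallest order with enough coordinates to absorb the constant overhead of this gadget. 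Once the base case is in hand, the reduction of the second paragraph propagates it: by induction on $n$, $\bdim(\lat n)\le\bdim(\lat 6)+(n-6)\le n-1<n$ for all $n\ge 6$. (A direct construction of $n-1$ orders for every $n\ge 6$, bypassing the induction, is an equally plausible route and would follow the same two design principles.)
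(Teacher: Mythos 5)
Your overall scaffolding coincides with the paper's: reduce to a single small base case via a product decomposition, then spread the saving over all $n$. Your lifting argument for $\lat{n}\cong\lat{n-1}\times C_2$ is essentially correct and is a special case of the paper's \cref{lemma:bdim_prod} ($\bdim(P\times Q)\le\bdim(P)+\bdim(Q)$); the paper instead writes $\lat{n}=(\lat{6})^k\times\lat{r}$ and gets the slightly stronger bound $\bdim(\lat{n})\le\lceil\tfrac{5}{6}n\rceil$, whereas your one-coordinate-at-a-time induction only gives $n-1$ — a harmless difference for the statement at hand.

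The genuine gap is the base case, which is the entire nontrivial content of the theorem. You correctly identify that everything hinges on $\bdim(\lat{6})\le 5$, but you do not prove it: you describe design principles (some orders carrying ``coordinate-profile'' information, others adjudicating the $S_6$-type critical pairs, in analogy with the $4$-order realizers of standard examples) and then defer to ``an explicit (possibly computer-assisted) construction,'' without exhibiting five linear orders, a decoder, or any verification that the decoder is correct on all $64\times 64$ ordered pairs. Nothing in your heuristic rules out that five orders are insufficient, so the existence claim cannot be waved through; it must be witnessed. In the paper this is exactly \cref{lem:bdim_of_B_6}: five explicit linear orders on the subsets of $[6]$ (found by a SAT solver, listed in \cref{tab:boolean_realizer}) together with the symmetric decoder $\phi(\varepsilon_1,\dots,\varepsilon_5)=1$ iff at most one $\varepsilon_i$ is $0$, with correctness checked exhaustively by the script in \cref{sec:code}. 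Until you supply such a witness (or some other proof that $\bdim(\lat{6})\le 5$, or of any base case $\bdim(\lat{m})\le m-1$), the proposal establishes only the reduction $\bdim(\lat{n})\le\bdim(\lat{n-1})+1$, not the theorem.
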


More precisely, we prove that $\bdim(\lat{n}) \leq \left\lceil\frac{5}{6} n\right\rceil$ for every positive integer $n$. 
The best lower bound we could prove is $\bdim(\lat{n}) \geq n \slash \log(n+1) $ -- 
see \cref{cor:lat_bdim_lower_bound}. Actually, we tend to believe that the right order of magnitude is $o(n)$.

\begin{figure}[t!]
    \centering
    \includegraphics[scale=0.9]{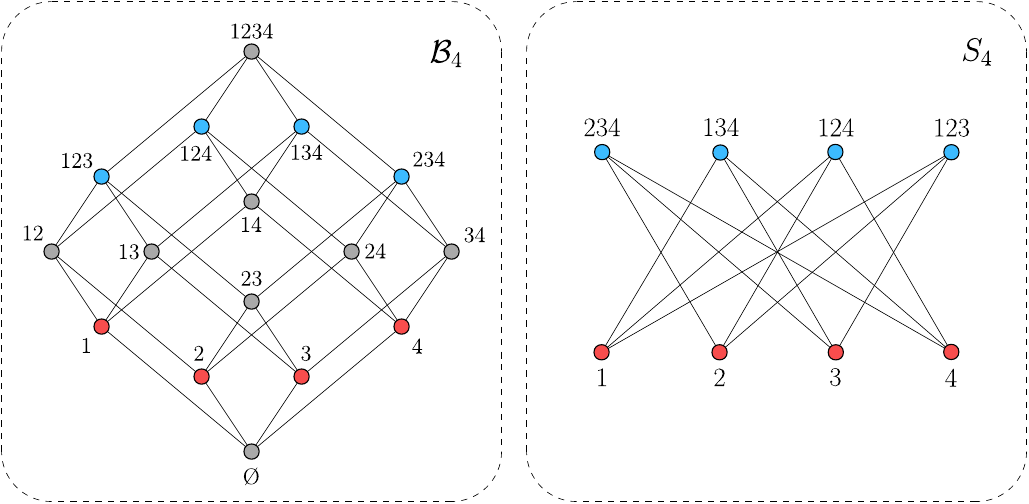}
    \caption{
    For every positive integer $n \geq 2$, the standard example of order $n$, denoted by $S_n$, is isomorphic to the subposet of $\lat{n}$ induced by the singletons and co-singletons.
    On the right, we show a poset diagram of $S_4$ and on the left, we show $S_4$ as a subposet of $\lat{4}$.
    }\label{fig:1}
\end{figure}

Next, we give an example of a family of posets with the above-mentioned property for the Boolean dimension.
This family is a natural generalization of the family of Boolean lattices.
For every positive integer $n$, we define the poset $\mlat{n}$, as the poset on the family of all multisets containing elements in $[n]$ equipped with the inclusion relation. In a multiset, we allow elements to have arbitrary multiplicities; thus, these posets have infinitely many elements. 
Interestingly, the posets $\mlat{n}$ are particularly useful in studying the dimension of divisibility posets.
It is not hard to see that $\mlat{n}$ is a product of $n$ infinite linear orders, and so, $\bdim(\mlat{n}) \leq \dim(\mlat{n}) \leq n$ (see \cref{sec:product} for more details).
We prove that this bound is tight.

\begin{theorem}\label{th:mlat_exact_bdim}
    For every positive integer $n$,
        \[\bdim(\mlat{n}) = \dim(\mlat{n}) = n.\]
\end{theorem}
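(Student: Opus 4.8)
The plan is to reduce \cref{th:mlat_exact_bdim} to the single inequality $\bdim(\mlat n)\ge n$: by the introduction $\mlat n$ is the product of $n$ infinite chains, so $\bdim(\mlat n)\le\dim(\mlat n)\le n$, and hence a matching lower bound on $\bdim$ forces $\bdim(\mlat n)=\dim(\mlat n)=n$. To prove $\bdim(\mlat n)\ge n$ I would induct on $n$ in the strengthened form: \emph{for every $n\ge 1$, the poset $\mlat n$ contains a finite subposet $Q_n$ with $\bdim(Q_n)\ge n$} (this suffices since Boolean dimension is monotone under taking subposets). The base case $n=1$ is immediate, as any $2$-element chain lies in $\mlat 1$ and has Boolean dimension $1$.

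The inductive step rests on a \textbf{chain-elimination lemma}: for every finite poset $R$ and every integer $d\ge 1$ there is an integer $N=N(R,d)$ such that, writing $C_N$ for a chain on $N$ elements, $\bdim(R\times C_N)\le d$ implies $\bdim(R)\le d-1$. Granting the lemma, let $Q_{n-1}\subseteq\mlat{n-1}$ be a finite subposet with $\bdim(Q_{n-1})\ge n-1$ (induction hypothesis) and put $Q_n=Q_{n-1}\times C_{N(Q_{n-1},\,n-1)}$. Since $\mlat{n-1}$ carries the product order on $\mathbb{N}^{\,n-1}$ and a finite chain embeds into $\mathbb{N}$, we have $Q_n\subseteq\mathbb{N}^{\,n-1}\times\mathbb{N}=\mlat n$; and the contrapositive of the lemma (with $d=n-1$) gives $\bdim(Q_n)\ge n$, closing the induction.

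To prove the lemma, let $N$ be large (to be fixed at the end) and assume $\bdim(R\times C_N)\le d$, witnessed by linear orders $L_1,\dots,L_d$ on the ground set together with a Boolean decoder $f$; write an element as $(r,k)$ with $r\in R$ and $k\in[N]$. The crucial structural fact is that relations \emph{across} levels recover the order of $R$: for $k<k'$, one has $(r,k)<(r',k')$ in $R\times C_N$ exactly when $r\le r'$ in $R$. First, by pigeonhole pass to an infinite set of levels on which the linear order induced by each $L_i$ on the slice $R\times\{k\}$ is one fixed linear order $M_i$ on $R\cong R\times\{k\}$; then colour the $2$-subsets of this infinite set by the pattern describing how each $L_i$ interleaves the two corresponding slices, and apply Ramsey's theorem to pass to a further infinite set of ``stabilised'' levels on which this interleaving pattern is also constant. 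Restricting the realizer to a single stabilised slice already yields a Boolean realizer $(M_1,\dots,M_d,f)$ of $R$ — but with $d$ orders. The idea is to spend the frozen cross-level data to drop one of them: once the interleaving of two stabilised slices under $L_{i^\star}$ is constant, the bit that $L_{i^\star}$ contributes ought to be recoverable from the $f$-values already computed on cross-level pairs, so $L_{i^\star}$ can be deleted and $f$ recompiled into a decoder on $d-1$ variables.

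The step I expect to be the main obstacle is precisely this elimination. The cross-level comparisons under $L_i$ between two stabilised slices form a relation on $R$ that is only ``monotone in each coordinate with respect to $M_i$'' and is in general not the comparison relation of any single linear order on $R$; moreover the decoder we are allowed to build for $R$ may read only comparisons inside $R$, not cross-level bits. So the heart of the argument is a careful combinatorial construction — genuinely exploiting that the chain is long — showing that among $L_1,\dots,L_d$ one order can be removed and the decoder adjusted so that the resulting $(d-1)$-order scheme still correctly decides \emph{every} pair of $R$, comparable and incomparable alike (in contrast to the classical dimension-theoretic setting, where only incomparable pairs must be handled). Everything else — the reduction to finite subposets, the product/monotonicity bookkeeping, and the pigeonhole and Ramsey stabilisation — is routine by comparison.
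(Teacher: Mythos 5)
Your reduction to a lower bound on a finite subposet is fine and matches the paper's frame, but everything then hinges on your chain-elimination lemma, which you leave unproven --- and which is in fact false in the generality you state it. Take $R$ to be the disjoint union of two $2$-element chains and $d=2$. Then $R\times C_N$ (with $C_N$ your $N$-element chain) is a disjoint union of two grids, so it has dimension $2$ and hence $\bdim(R\times C_N)\le 2$ for every $N$; yet $\bdim(R)=2$, since under the paper's definition a poset of Boolean dimension at most $1$ must be a chain (the diagonal pairs force $\phi(1)=1$, and then $\phi(0)=0$ makes the order coincide with the single linear order). So no $N(R,2)$ exists, and the contrapositive you invoke in the inductive step --- $\bdim(Q_{n-1})\ge n-1$ implies $\bdim(Q_{n-1}\times C_N)\ge n$ --- is not available from the lemma as stated. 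Any repair would have to build structural hypotheses on $R$ into the lemma (for instance that $R$ is itself a product of long chains), and the step of ``spending the frozen cross-level data to drop one order,'' which you yourself flag as the main obstacle, is exactly where the real content lies: the pigeonhole and Ramsey stabilisation alone never produce the required $(d-1)$-order decoder. In effect you are steering toward the Product-Ramsey-type argument that the paper mentions and deliberately avoids, so as it stands the proposal has a genuine gap.

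For contrast, the paper's proof is a short counting argument (\cref{lemma:mn_lower_bound}). Inside $\mn{n}{m}$, the $n(m-1)$ multisets supported on a single element form a distinguishing set $\cgS$; given any Boolean realizer $(L_1,\dots,L_d)$, the signature $s(A)=(s_1(A),\dots,s_d(A))$, where $s_i(A)$ counts the members of $\cgS$ below $A$ in $L_i$, determines $A$: if $s(A)=s(B)$ with $A\neq B$, then $A$ and $B$ sit above exactly the same members of $\cgS$ in every $L_i$, and comparing both to the single-element multiset recording the larger of the two differing multiplicities gives a contradiction. Hence $(n(m-1)+1)^d\ge m^n$, i.e.\ $d\ge n\log m/\log(nm-n+1)$, which exceeds $n-1$ once $m$ is large enough (the paper takes $m\le n^{n-1}$); monotonicity under subposets then yields $\bdim(\mlat{n})\ge n$ with no induction on $n$ and no Ramsey theory.
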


This result can be derived using the Product Ramsey Theorem as mentioned in \cite[Section~3.4]{BPSTT19} (for more on this version of Ramsey Theorem see \cite[Section~3.1]{BPSTT19}, \cite[Section~4]{FFT99}, and \cite{TWW23}).
We provide a simple elementary proof, which also implies the mentioned lower bound: $\bdim(\lat{n}) \geq n\slash \log (n+1)$.
To be more precise, we prove that for every positive integer $n$, there exists a positive integer $m$ such that the Boolean dimension of a subposet of $\mlat{n}$, induced by all multisets with all elements having multiplicities less than $m$, is at least $n$.
We give an explicit upper bound on $m$, namely, $m\leq n^{n-1}$.

The rest of this paper is organized as follows. In \cref{sec:definitions}, we provide some essential definitions and notations used throughout the rest of the paper. In \cref{sec:product}, we discuss the dimension and the Boolean dimension of products of posets. In \cref{sec:upperbound}, we prove \cref{th:upper_bound}. In \cref{sec:multisets}, we prove \cref{th:mlat_exact_bdim}. Finally, in \cref{sec:open_problems}, we present some related open problems.

\section{Preliminaries}\label{sec:definitions}
The set of the first $n$ positive integers is denoted by $[n]$. By $\log x$ we denote the logarithm of $x$ with base $2$. 

A \emph{partially ordered set}, or \emph{poset} for short, is an ordered pair $P = (X, \leq)$, where $X$ is a non-empty set of elements called the \emph{ground set of $P$}, and $\leq$ is a binary relation on $X$ (called the \emph{order relation in $P$}), which is reflexive, antisymmetric and transitive. 
We do not require ground sets to be finite.
Sometimes, we replace the phrase $x \leq y$ in $P$ with $x \leq_P y$. 
For two posets $P$ and $Q$, we say that $Q$ is a \emph{subposet} of $P$ (denoted by $Q \subseteq P$) if 
the ground set of $Q$ is a subset of the ground set of $P$ and the order relation of $Q$ is the restriction of the order relation in $P$ to the ground set of $Q$.
We say that two elements $x,y$ in a poset $P$ are \emph{comparable} if $x \leq_P y$ or $y \leq_P x$. A poset, where all pairs of elements are comparable is called a \emph{linear order}. 
A poset $\overline{P}$ is a \emph{linear extension} of $P$ if $\overline{P}$ is a linear order on the ground set of $P$ such that $x \leq y$ in $\overline{P}$ whenever $x \leq y$ in $P$ for every two elements $x, y$ in $P$.

Let $P$ and $Q$ be two posets with ground sets $X$ and $Y$, respectively.
The \emph{product of $P$ and $Q$}, denoted by $P \times Q$, is the poset with the ground set $X \times Y$, where for any two pairs $(x_1, y_1), (x_2, y_2) \in X \times Y$, we have $(x_1, y_1) \leq (x_2, y_2)$ in $P \times Q$ if and only if $x_1 \leq x_2$ in $P$ and $y_1 \leq y_2$ in $Q$. 
Let $n$ be a positive integer. 
The \emph{$n$-th power of $P$} is the product of $n$ copies of $P$, denoted by $P^n$. 
The posets $P$ and $Q$ are \emph{isomorphic} if there exists a bijection $g: X \rightarrow Y$ such that for every two elements $x, y$ in $P$, we have $x \leq y$ in $P$ if and only if $g(x) \leq g(y)$ in $Q$.

For a linear order $L$ and two elements $x,y$ in $L$, we define $[ x \leq_{L} y ] \in \{ 0, 1 \}$ to be $1$ if $x \leq y$ in $L$ and $0$ otherwise.
For a sequence of linear orders $L_1, \dots,L_n$ on the same ground set and two elements $x,y$ in the ground set, we abbreviate
    \[[ x  \leq_{L_i} y ]_{i = 1}^n = ([ x  \leq_{L_1} y ], \dots, [ x  \leq_{L_n} y ]) \in \{0,1\}^{n}.\]
Let $P$ be a poset with at least two elements, and let $d$ be a nonnegative integer. 
Let $L_1, \dots, L_d$ be linear orders on the ground set of $P$, and let $\phi: \{0, 1\}^{d} \rightarrow \{ 0, 1 \}$ be any map. 
The pair $((L_1, \dots, L_d), \phi)$ is a \emph{Boolean realizer of $P$} if for every pair of elements $x, y$ in $P$,
    \[ x \leq_P y \hspace{3mm} \mathrm{iff} \hspace{3mm} \phi \left([x \leq_{L_i} y]_{i = 1}^d \right) = 1. \]
The \emph{size} of a Boolean realizer is the number of linear orders in the realizer. 
The \emph{Boolean dimension} of $P$ is equal to the minimum size of a Boolean realizer of $P$.\footnote{Note that in the literature two (almost equivalent) variants of this definition appear. See~\cite[Section~6]{Bergman25} for more details.}
The \emph{dimension of $P$} can be defined as the minimum size of a Boolean realizer of $P$, where the formula $\phi$ is fixed to be $\phi(\varepsilon_1, \dots, \varepsilon_d) = \varepsilon_1 \cdot \ldots \cdot \varepsilon_d$. 
Note that this is not the usual way of phrasing this definition (see e.g.\ \cite{Tro19} for the classical definition and basic facts on the dimension). 
However, this immediately yields $\bdim(P) \leq \dim(P)$ for every poset $P$. 
It is not hard to see that both Boolean dimension and dimension are monotone under taking subposets.

\section{Boolean dimension and dimension of products of posets}\label{sec:product}
One of the exercises one can solve to familiarize oneself with the notion of the dimension of a poset is to show that for every two posets $P$ and $Q$, we have $\dim(P \times Q) \leq  \dim(P) + \dim(Q)$.
In particular, this implies that for every poset $P$ and every positive integer~$n$, we have $\dim(P^n) \leq n \cdot \dim(P)$.
Therefore, to upper bound the dimension of a poset, one can express this poset in terms of products of some other posets.
It turns out that both $\lat{n}$ and $\mlat{n}$ have very natural representations as products of $n$ linear orders (two-element linear orders in the former case and infinite linear orders in the latter case). 
Clearly, $\lat{n}$ is a subposet of $\mlat{n}$.

Let $n$ be a positive integer.
Consider a bijection between all elements of $\mlat{n}$ and $\{0,1,\dots\}^{n}$ defined as follows.
To a given multiset $A$ of elements in $[n]$, assign $v \in \{0,1,\dots\}^{n}$, where for each $i \in [n]$, the value $v_i$ is the multiplicity of $i$ in $A$.
The bijection transforms the inclusion relation into the coordinate-wise order relation, in other words, the product relation. 
We obtain that $\mlat{n}$ is isomorphic to $(\mlat{1})^n$, which implies $\dim(\lat{n}) \leq \dim(\mlat{n}) \leq n$. 

On the other hand, as was already mentioned, $\lat{n}$ contains the standard example of order $n$ as a subposet, which yields $n \leq \dim(\lat{n})$.

\begin{proposition}\label{prop:dim_of_lattices}
    For every positive integer $n$,
        \[ \dim(\lat{n}) = \dim(\mlat{n}) = n. \]
\end{proposition}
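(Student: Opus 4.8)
The plan is to establish the proposition by proving the two-sided bound via a chain of inequalities, all of which have been set up in the preceding discussion. Concretely, I would argue
\[
n \leq \dim(\lat{n}) \leq \dim(\mlat{n}) \leq n,
\]
so that equality holds throughout; in particular $\dim(\lat{n}) = \dim(\mlat{n}) = n$.

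For the upper bound, I would use the isomorphism $\mlat{n} \cong (\mlat{1})^n$ exhibited just before the statement, where a multiset $A$ with entries in $[n]$ is sent to its multiplicity vector $v \in \{0,1,\dots\}^n$ and the inclusion order becomes the coordinatewise (product) order. Since $\mlat{1}$ is an infinite chain, it is a linear order, hence $\dim(\mlat{1}) = 1$. Combining this with the general product inequality $\dim(P^n) \leq n\cdot\dim(P)$ (the warm-up exercise recalled at the start of \cref{sec:product}, which follows from concatenating realizers coordinatewise) gives $\dim(\mlat{n}) \leq n$. Because $\lat{n}$ is a subposet of $\mlat{n}$ and dimension is monotone under taking subposets (noted in \cref{sec:definitions}), we also get $\dim(\lat{n}) \leq \dim(\mlat{n}) \leq n$.

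For the lower bound, I would invoke the fact, recalled in the introduction, that $\lat{n}$ contains the standard example $S_n$ as a subposet (the subposet induced by all singletons and all co-singletons), together with the classical fact $\dim(S_n) = n$ from Dushnik and Miller. Monotonicity of dimension under subposets then yields $n = \dim(S_n) \leq \dim(\lat{n})$. Chaining with the upper bound closes the argument. If a self-contained justification of $\dim(S_n) = n$ is wanted, one can note that the $n$ linear extensions $L_i$, each placing the co-singleton $[n]\setminus\{i\}$ below the singleton $\{i\}$ and above every $\{j\}$ with $j \neq i$, form a realizer, while any realizer must use at least $n$ linear orders since for each $i$ some $L_i$ must reverse the incomparable pair $(\{i\}, [n]\setminus\{i\})$ and no single $L_i$ can do this for two distinct indices.

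There is essentially no obstacle here: every ingredient — the product bound on dimension, the chain decomposition of $\mlat{n}$, subposet monotonicity, and the standard example computation — is either standard or already assembled in the excerpt, so the proof is a short assembly rather than a genuine argument. The only point deserving a sentence of care is making sure the product inequality is applied to the correct exponent and base ($\dim(\mlat{1}) = 1$), and that the subposet relation $\lat{n} \subseteq \mlat{n}$ is spelled out (every subset of $[n]$ is in particular a multiset over $[n]$ with all multiplicities at most $1$).
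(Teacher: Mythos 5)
Your proposal is correct and matches the paper's argument exactly: the paper derives the upper bound from the isomorphism $\mlat{n} \cong (\mlat{1})^n$ together with the product inequality and subposet monotonicity, and the lower bound from the standard example $S_n$ sitting inside $\lat{n}$, just as you do. This is the same short assembly, so there is nothing to add.
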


We can also prove the additive property of the Boolean dimension for product of posets.
We will use this property in the proof of \cref{th:upper_bound}.

\begin{lemma}\label{lemma:bdim_prod}
    For every two posets $P$ and $Q$, we have $\bdim(P \times Q) \leq \bdim(P) + \bdim(Q)$.
\end{lemma}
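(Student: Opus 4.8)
The plan is to take Boolean realizers of $P$ and $Q$ and ``stack'' them into a Boolean realizer of $P \times Q$, mimicking the classical argument that $\dim(P \times Q) \leq \dim(P) + \dim(Q)$. Let $\bdim(P) = d$ with Boolean realizer $((L_1, \dots, L_d), \phi)$, and $\bdim(Q) = e$ with Boolean realizer $((M_1, \dots, M_e), \psi)$. (The degenerate cases where $P$ or $Q$ is a single element, contributing $0$, should be disposed of separately; I would note that if $P$ has one element then $P \times Q \cong Q$ and similarly for $Q$.) I would then define $d + e$ linear orders on the ground set $X \times Y$: for $i \in [d]$, let $\widehat{L_i}$ order the pairs by comparing first coordinates according to $L_i$ and breaking ties by some fixed linear extension of the second-coordinate order; symmetrically, for $j \in [e]$, let $\widehat{M_j}$ compare second coordinates according to $M_j$ and break ties by a fixed linear extension of the first coordinate. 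Each $\widehat{L_i}$ and $\widehat{M_j}$ is a genuine linear order on $X \times Y$.

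The key observation driving the correctness is: for $(x_1,y_1), (x_2,y_2) \in X \times Y$, we need to recover $[x_1 \leq_{L_i} x_2]$ and $[y_1 \leq_{M_j} y_2]$ from the $d+e$ bits $[(x_1,y_1) \leq_{\widehat{L_i}} (x_2,y_2)]$ and $[(x_1,y_1) \leq_{\widehat{M_j}} (x_2,y_2)]$. This is not literally possible bit-by-bit, because when $x_1 = x_2$ the bit $[(x_1,y_1) \leq_{\widehat{L_i}} (x_2,y_2)]$ reports the tie-break, not anything about $L_i$. The standard fix is to use \emph{two} families of tie-breaks, or equivalently to observe that $(x_1,y_1) \leq (x_2,y_2)$ in $P \times Q$ holds iff $x_1 \leq_P x_2$ and $y_1 \leq_Q y_2$, and that $x_1 \leq_P x_2$ is detected by $\phi$ applied to the bits from the $\widehat{L_i}$ \emph{provided} the tie-break is consistent. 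Concretely, I would use, for each $i$, the linear order $\widehat{L_i}$ built from $L_i$ refined by a fixed linear extension $L^*$ of $Q$; then $[(x_1,y_1) \leq_{\widehat{L_i}} (x_2,y_2)] = [x_1 \leq_{L_i} x_2]$ whenever $x_1 \neq x_2$, and when $x_1 = x_2$ both sides of the desired equivalence must be checked through the $y$-coordinate anyway. The clean way: define $\phi'(\varepsilon_1,\dots,\varepsilon_d,\delta_1,\dots,\delta_e) = \phi(\varepsilon_1,\dots,\varepsilon_d) \wedge \psi(\delta_1,\dots,\delta_e)$ — wait, this needs care precisely in the tie case, so instead I would refine each $\widehat{L_i}$ by $L^*$ and each $\widehat{M_j}$ by $M^*$ (fixed linear extensions of $Q$ and $P$ respectively) and argue: $x_1 \leq_P x_2$ iff $\phi\bigl([(x_1,y_1)\leq_{\widehat{L_i}}(x_2,y_2)]_{i=1}^d\bigr) = 1$ — this holds because the bits agree with $[x_1 \leq_{L_i} x_2]$ when $x_1 \neq x_2$, and when $x_1 = x_2$ all bits equal $1$ by reflexivity of each $L_i$, so $\phi$ returns $1$, which is correct since $x_1 \leq_P x_2$. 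Symmetrically for $Q$. Hence setting $\phi'$ to be the conjunction of the two gives a Boolean realizer of size $d + e$.

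The main obstacle to watch for is exactly the tie-breaking subtlety just flagged: one must verify that reflexivity of the linear orders $L_i$ makes the ``diagonal'' case $x_1 = x_2$ come out right under $\phi$ (which it does, since a Boolean realizer must satisfy $\phi(1,\dots,1) = 1$ as $x \leq_P x$ always, and when $x_1 = x_2$ each refined order gives bit $1$). The only genuinely delicate point is that we need $[(x_1,y_1) \leq_{\widehat{L_i}} (x_2,y_2)]$ to equal $[x_1 \leq_{L_i} x_2]$ whenever $x_1 \neq x_2$, which is immediate from the definition of the refinement, and to equal $1$ when $x_1 = x_2$, also immediate. I would then assemble these facts: $(x_1,y_1) \leq_{P \times Q} (x_2,y_2)$ iff ($x_1 \leq_P x_2$ and $y_1 \leq_Q y_2$) iff ($\phi$ of the $\widehat{L_i}$-bits is $1$ and $\psi$ of the $\widehat{M_j}$-bits is $1$) iff $\phi'$ of all $d+e$ bits is $1$, completing the proof. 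Everything else is bookkeeping.
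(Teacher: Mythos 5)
Your construction (tie-break each $\widehat{L_i}$ by a fixed linear extension $L^*$ of $Q$, each $\widehat{M_j}$ by a fixed linear extension of $P$, and take $\phi'=\phi\wedge\psi$, with the one-element cases disposed of separately) is exactly the paper's construction. However, your verification fails at precisely the delicate point you yourself flagged. When $x_1=x_2$ and $y_1\neq y_2$, the bits $[(x_1,y_1)\leq_{\widehat{L_i}}(x_2,y_2)]$ are \emph{not} all equal to $1$ ``by reflexivity'': by your own definition of $\widehat{L_i}$ they all equal the tie-break bit $[y_1\leq_{L^*}y_2]$, which is $0$ whenever $y_2$ strictly precedes $y_1$ in $L^*$. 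Consequently the factorwise equivalence you assert, namely ``$x_1\leq_P x_2$ iff $\phi\bigl([(x_1,y_1)\leq_{\widehat{L_i}}(x_2,y_2)]_{i=1}^d\bigr)=1$'', is false in general: take $\phi$ to be the conjunction (a perfectly legitimate choice, e.g.\ coming from an ordinary realizer of $P$), $x_1=x_2$, and $y_2<_{L^*}y_1$; then the left-hand side holds while $\phi(0,\dots,0)=0$. Nothing in the definition of a Boolean realizer constrains the value $\phi(0,\dots,0)$, so reflexivity does not rescue this case.

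The lemma and your construction are still correct; what is missing is the joint case analysis that the paper compresses into ``one can verify''. The factorwise equivalence can only fail when $x_1=x_2$ and all $\widehat{L_i}$-bits are $0$, i.e.\ when $y_1\neq y_2$ and $y_2<_{L^*}y_1$. Since $L^*$ is a linear extension of $Q$, this forces $y_1\not\leq_Q y_2$; and because $y_1\neq y_2$, the $\widehat{M_j}$-bits faithfully report $[y_1\leq_{M_j}y_2]$, so $\psi$ of them is $0$ and hence $\phi'$ evaluates to $0$ --- which is the correct answer, as $(x_1,y_1)\not\leq(x_2,y_2)$ in $P\times Q$. The symmetric argument handles ties in the second coordinate. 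In other words, the conjunction $\phi'$ is correct not because each factor is detected correctly in isolation (it is not), but because every tie case in which one factor's formula misfires is a case where the other factor's formula already returns $0$. With this repair your argument closes and coincides with the paper's proof.
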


\begin{proof}
    Let $P$ and $Q$ be two posets.
    If $P$ has exactly one element, then $P \times Q$ is isomorphic to $Q$, and so, $\bdim(P \times Q) = \bdim(Q) \leq \bdim(P) + \bdim(Q)$.
    Symmetrically, the assertion follows in the case where $Q$ has exactly one element.
    Now, we assume that both posets have at least two elements. 
    Let $s = \bdim(P)$ and $t = \bdim(Q)$.
    Let $((L_1, \dots, L_s), \phi_P)$ be a Boolean realizer of $P$ and $((K_1, \dots, K_t), \phi_Q)$ be a Boolean realizer of $Q$.

    The goal is to define a Boolean realizer of $P \times Q$ of size $s + t$.
    We start with $\phi: \{0, 1\}^{s+t} \rightarrow \{0, 1\}$ defined as
    \[
        \phi(\delta_1, \dots, \delta_s, \varepsilon_1, \dots, \varepsilon_t) = \phi_P(\delta_1, \dots, \delta_s) \cdot \phi_Q(\varepsilon_1, \dots, \varepsilon_t).
    \]
    Fix $\overline{P}$ and $\overline{Q}$ arbitrary linear extensions of $P$ and $Q$ respectively. 
    We define two families of linear orders on the ground set of $P \times Q$. 
    First, for each $i \in [s]$, we construct a linear order $M_i$.
    Let $(p_1, q_1), (p_2, q_2)$ be two elements of $P \times Q$.
    If $p_1 \neq p_2$, then we order the elements according to $L_i$, that is, $(p_1, q_1) \leq (p_2, q_2)$ in $M_i$ if and only if $p_1 \leq p_2$ in $L_i$.
    In the case where $p_1 = p_2$, we order the elements according to $\overline{Q}$.
    Next, for each $j \in [t]$, we construct a linear order $N_j$ similarly, that is, for all $(p_1, q_1), (p_2, q_2)$ elements of $P \times Q$, if $q_1 \neq q_2$, then we order the elements in $N_j$ as in $K_j$ and if $q_1 = q_2$, then we order the elements in $N_j$ as in $\overline{P}$.
    
    Let $(T_1,\dots,T_{s+t})$ be the concatenation of $(M_1,\dots,M_s)$ and $(N_1,\dots,N_t)$.
    One can verify that for all $(p_1,q_1)$,$(p_2,q_2)$ in $P\times Q$, 
    \begin{align*}
        \phi\left([(p_1,q_1) \leq_{T_i} (p_2,q_2)]_{i = 1}^{s+t} \right) = 
        \phi_P\left([p_1 \leq_{M_i} p_2]_{i = 1}^s \right) \cdot \phi_Q\left([q_1 \leq_{N_i} q_2]_{i = 1}^t \right).
    \end{align*}
    Therefore, $((T_1,\dots,T_{s+t}), \phi)$ is a Boolean realizer of $P \times Q$, and so, $\bdim(P \times Q) \leq s + t$.
\end{proof}

\section{Boolean dimension of the Boolean lattice}\label{sec:upperbound}
In this section, we prove the following result, which immediately implies \cref{th:upper_bound}.

\begin{theorem}\label{th:technical_upper_bound}
    For every positive integer $n$, $\bdim(\lat{n}) \leq \left\lceil\frac{5}{6} n\right\rceil$.
\end{theorem}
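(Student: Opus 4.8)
The plan is to isolate one small base case, $\bdim(\lat{6}) \le 5$, and obtain everything else from the product lemma together with elementary arithmetic. Since a subset of $[a+b]$ decomposes uniquely into its traces on $\{1,\dots,a\}$ and on $\{a+1,\dots,a+b\}$, with inclusion being coordinatewise, we have $\lat{a+b}\cong\lat{a}\times\lat{b}$; iterating, if $n=6q+r$ with $0\le r\le 5$ then $\lat{n}\cong(\lat{6})^{q}\times\lat{r}$. Applying \cref{lemma:bdim_prod} repeatedly and the trivial bound $\bdim(\lat{r})\le\dim(\lat{r})=r$ from \cref{prop:dim_of_lattices} (with the convention $\bdim(\lat{0})=0$), the inequality $\bdim(\lat{6})\le 5$ would give $\bdim(\lat{n})\le 5q+r$. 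Since $\left\lceil\tfrac{5}{6}r\right\rceil=r$ for $0\le r\le 5$, we have $5q+r=\left\lceil\tfrac{5}{6}(6q+r)\right\rceil=\left\lceil\tfrac{5}{6}n\right\rceil$, which is the claimed bound (and it is $<n$ as soon as $n\ge 6$). So the proof of \cref{th:technical_upper_bound}, and hence of \cref{th:upper_bound}, comes down entirely to the single statement $\bdim(\lat{6})\le 5$.

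Thus the task reduces to exhibiting a Boolean realizer of $\lat{6}$ of size $5$: five linear orders $L_1,\dots,L_5$ on the $64$ subsets of $[6]$ and a formula $\phi\colon\{0,1\}^5\to\{0,1\}$ with $A\subseteq B\iff\phi\bigl([A\le_{L_i}B]_{i=1}^{5}\bigr)=1$ for all $A,B\subseteq[6]$. The design constraint is that the $5$-bit signature of a pair $(A,B)$ must still reveal whether the deficiency set $A\setminus B$ is empty, even though five orders cannot monitor the six elements of $[6]$ one at a time. I would look for suitable $L_i$ — which, crucially, need not be (and most likely are not) linear extensions of $\lat{6}$ — together with a matching $\phi$, possibly by an exhaustive search over a structured family of candidates: for instance, orders built by first comparing weighted sizes $|A\cap S_j|$ with $|B\cap S_j|$ for a carefully chosen family $S_1,\dots,S_5\subseteq[6]$ and breaking ties by a fixed linear extension of $\lat{6}$, with $\phi$ tailored so that no pair with $A\setminus B\neq\emptyset$ collides with a pair satisfying $A\subseteq B$. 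I would then present the chosen $L_i$ and $\phi$ explicitly and verify the equivalence by a short case analysis on the possible signatures, or simply by checking all $64^2$ ordered pairs.

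The main obstacle is precisely this construction. One cannot take $\phi$ to be a conjunction: that would make the realizer a $5$-dimensional one, contradicting $\dim(\lat{6})=6$. In fact, if the $L_i$ are all linear extensions then the formula is powerless — one needs at least $6$ of them, since the six critical pairs $(\{i\},[6]\setminus\{i\})$ of the standard example $S_6\subseteq\lat{6}$ each require a distinct reversing extension. So the $L_i$ must be genuinely non-monotone and their interplay with $\phi$ genuinely subtle, and the delicate point is ruling out all collisions between the signatures of comparable and of incomparable pairs — in particular ensuring that no pair with $B\subsetneq A$ and no incomparable pair is assigned a signature on which $\phi$ evaluates to $1$ — while holding the number of orders at $5$. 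Everything surrounding this base case is routine: just the isomorphism $\lat{n}\cong(\lat{6})^{q}\times\lat{r}$, \cref{lemma:bdim_prod}, and the arithmetic above.
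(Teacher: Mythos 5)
Your reduction is exactly the paper's: write $n=6q+r$, use $\lat{n}\cong(\lat{6})^{q}\times\lat{r}$, apply \cref{lemma:bdim_prod} repeatedly together with $\bdim(\lat{r})\le\dim(\lat{r})=r$, and check the arithmetic $5q+r=\left\lceil\tfrac{5}{6}n\right\rceil$. All of that is correct, and your side remarks (that $\phi$ cannot be a conjunction, and that the five orders cannot all be linear extensions of $\lat{6}$ because the six critical pairs of the standard example $S_6$ would each need their own reversing extension) are sound. But there is a genuine gap: the statement $\bdim(\lat{6})\le 5$, to which you correctly reduce everything and which you yourself identify as the entire content of the theorem, is never actually established. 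You only describe a plan for finding a realizer --- a heuristic family of orders built by comparing quantities like $|A\cap S_j|$ with ties broken by a linear extension, to be searched exhaustively --- without exhibiting five concrete linear orders and a concrete $\phi$, and without any argument that such a structured family even contains a valid realizer. A proof strategy of the form ``I would search for suitable $L_i$ and then verify them'' does not prove existence; as it stands, nothing in your write-up rules out the possibility that $\bdim(\lat{6})=6$, in which case the whole argument collapses.

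For comparison, the paper's proof of \cref{lem:bdim_of_B_6} is precisely the missing ingredient made explicit: it fixes $\phi(\varepsilon_1,\dots,\varepsilon_5)=1$ iff at most one of the five bits is $0$, exhibits five explicit linear orders on the $64$ subsets of $[6]$ (found by a SAT-solver, listed in \cref{tab:boolean_realizer}), and verifies the realizer property by checking all $64^2$ ordered pairs with a short script (\cref{sec:code}). To complete your proposal you would need to supply such an explicit certificate (or some other existence argument for a size-$5$ realizer of $\lat{6}$); everything else in your write-up is routine and matches the paper.
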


In fact, we just show that $\bdim(\lat{6}) \leq 5$, which combined with \cref{lemma:bdim_prod}, implies \cref{th:technical_upper_bound}.
Indeed, for every positive integer $n$, where $n = 6k + r$ for some nonnegative integer $k$ and $r \in \{0,\dots,5\}$, we have $\lat{n} = (\lat{6})^k \times \lat{r}$. Hence, 
    \[\bdim(\lat{n}) \leq k \cdot \bdim(\lat{6}) + \bdim(\lat{r}) \leq k \cdot 5 + r \leq \left\lceil\frac{5}{6} n\right\rceil. \]

\begin{lemma}\label{lem:bdim_of_B_6}
    It holds that $\bdim(\lat{6}) \leq 5$.
\end{lemma}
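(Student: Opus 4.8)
The plan is to exhibit an explicit Boolean realizer of $\lat{6}$ with $5$ linear orders and a Boolean formula $\phi\colon\{0,1\}^5\to\{0,1\}$. Since $\lat{6}$ is a lattice with bottom $\emptyset$ and top $[6]$, and since comparability $A\leq_{\lat6} B$ means $A\subseteq B$, I would first reduce the problem: it suffices to decide, given $A,B\subseteq[6]$, whether $A\setminus B=\emptyset$. The natural idea is to use linear orders that are ``refinements'' of the containment order on one coordinate at a time: for each $i\in[6]$ one would like an order $L_i$ such that $[A\leq_{L_i}B]$ reveals whether $i\in A\setminus B$ (i.e.\ whether $i$ witnesses $A\not\subseteq B$). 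With six such orders and $\phi=\bigwedge_i$ one recovers exactly $\dim(\lat6)\le 6$; the whole point is to do it with five.

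The key trick I expect to need is that a single cleverly chosen linear order can simultaneously encode information about \emph{two} coordinates, provided the formula $\phi$ is allowed to be something other than a conjunction. Concretely, I would try to build an order $L$ on $2^{[6]}$ that, when restricted appropriately, lets us detect ``$i\in A\setminus B$ or $j\in A\setminus B$'' in combination with the other orders — exploiting the fact that the standard example $S_n\subseteq\lat n$ already has Boolean dimension at most $4$, so the ``singleton vs.\ co-singleton'' obstructions are cheap, and only the genuinely ``middle'' behavior costs. A cleaner route may be to use the product structure: $\lat6=\lat2\times\lat2\times\lat2$ would give $\bdim\le 3\bdim(\lat2)$ via \cref{lemma:bdim_prod}, which is useless since $\bdim(\lat2)=\dim(\lat2)=2$ gives only $6$; but $\lat6=\lat3\times\lat3$ gives $\bdim\le 2\bdim(\lat3)$, so it would in fact suffice to prove $\bdim(\lat3)\le 2$ — except that is false, since $\dim(\lat3)=3$ and one checks $\bdim(\lat3)=3$. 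So the product shortcut does not help at this scale, and a genuinely ad hoc realizer for $\lat6$ is needed.

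Therefore the real plan is: (1) design five linear orders $L_1,\dots,L_5$ on $2^{[6]}$, most likely built by fixing linear orders within each ``block'' of a suitable partition of the coordinate set $[6]$ and interleaving them — for instance grouping coordinates as $\{1,2,3\}$ and $\{4,5,6\}$, or using an asymmetric scheme where three orders handle $\{1,2,3\}$ in the spirit of a Boolean realizer of $\lat3$ and the remaining two do double duty on $\{4,5,6\}$ using the extra formula freedom; (2) write down $\phi$ explicitly as a small Boolean expression in the five bits; (3) verify correctness by a finite case check over all ordered pairs $(A,B)$, which one organizes by which coordinates lie in $A\setminus B$ and which in $B\setminus A$, so that only finitely many ``types'' need checking rather than all $4^6$ pairs. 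The main obstacle is step (1): finding the right five orders and the matching $\phi$ is a genuine search, and the correctness argument in step (3) must be airtight because Boolean realizers are unforgiving — a single misclassified pair breaks everything. I would expect the successful construction to be somewhat asymmetric and to lean on the identity $\phi$ being a majority-like or threshold-like function rather than a pure AND, since that is exactly the flexibility that separates $\bdim$ from $\dim$.
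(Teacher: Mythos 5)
There is a genuine gap: your text is a plan for finding a realizer, not a proof that one exists. The entire content of the lemma is the explicit witness --- five concrete linear orders on the $64$ subsets of $[6]$ together with a concrete $\phi\colon\{0,1\}^5\to\{0,1\}$ --- and your proposal never produces it; you even acknowledge that step (1) ``is a genuine search.'' Nothing in your outline certifies that a five-order realizer of $\lat{6}$ exists at all, so the inequality $\bdim(\lat{6})\le 5$ is never established. Your guesses about the shape of the solution do match the actual one (the paper's $\phi$ is exactly a threshold function, ``at most one $0$ among the five bits,'' and your observation that product decompositions such as $\lat{6}=\lat{3}\times\lat{3}$ cannot get below $6$ is correct), but the paper closes the gap by actually exhibiting the five orders, found via a SAT solver, and verifying the realizer property by an exhaustive machine check over all ordered pairs of subsets.

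A secondary concern is your step (3): you hope to verify correctness by hand by organizing pairs $(A,B)$ into finitely many ``types'' determined by $A\setminus B$ and $B\setminus A$, which presupposes that the five orders interact with the coordinates in a structured, block-wise way (e.g.\ blocks $\{1,2,3\}$ and $\{4,5,6\}$). There is no evidence that a realizer with such symmetry exists; the known realizer has no visible block structure, and the verification is simply a brute-force check of all $64\times 64$ pairs, which is trivial for a computer but not something your type-based argument is shown to replace. Until you either exhibit five explicit orders with a matching $\phi$ (and check them), or give a structural existence argument, the lemma remains unproved.
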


\begin{proof}
    Consider $\phi : \{0, 1 \}^{[5]} \rightarrow \{ 0, 1\}$, such that $\phi(\varepsilon_1, \varepsilon_2, \varepsilon_3, \varepsilon_4, \varepsilon_5) = 1$ if and only if there is at most one $0$ among $ \varepsilon_1, \varepsilon_2, \varepsilon_3, \varepsilon_4, \varepsilon_5 $. 
    In \cref{tab:boolean_realizer}, we give linear orders $L_1, L_2, L_3, L_4, L_5$ on the ground set of $\lat{6}$. 
    We claim that $((L_1, L_2, L_3, L_4, L_5), \phi)$ is a Boolean realizer of $\lat{6}$. 
    The claim can be verified using the Python script provided in \cref{sec:code}.
\end{proof}

The realizer in \cref{tab:boolean_realizer} was found using a SAT solver~\cite{SoosNC09}.
Our approach encodes the linear orders
and the formula directly.
For a given poset $P$ and a positive integer $k$, we build a SAT formula so that the formula is satisfiable if and only if $\bdim(P) \leq k$.
Moreover, given a satisfying assignment to the formula we can construct a Boolean realizer
\(((L_1, L_2, \dots, L_k), \phi)\) of $P$ of size $k$.
Introduce a variable $x_{A, B, i}$ for every pair of distinct elements $A, B $ in $P$ and
$i \in [k]$.
We will set $A <_{L_i} B$ in the Boolean realizer if and only if $x_{A, B, i}$ is set to $1$ in the satisfying assignment.
Next, introduce a variable $y_{m}$ for every $m \in \set{0,1}^k$.
We will set $\phi(m) = 1$ if and only if $y_m$ is set to $1$ in the satisfying assignment.
The SAT formula is a conjunction of the following conditions.
First, we have to make sure that each $L_i$ is a linear order, that is, for all distinct
$A,B,C$ in $P$ and $i \in [k]$ we add the clauses $x_{A,B,i} \wedge x_{B,C,i} \Rightarrow x_{A, C, i}$
and $x_{A,B,i} \Leftrightarrow \neg x_{B, A, i}$ to our formula.
Second, we have to make sure that our boolean formula \(\phi\) gives the correct answers for any input.
Thus, for every $m \in \set{0,1}^k$ and all $A, B$ in $P$ we add a condition saying that 
if $[A \leq_{L_i} B]_{i=1}^k = m$, then $y_m = 1$ if and only if $A \leq_P B$. 
For concreteness,
if $k=4$, $m=0110$ and $A \leq_P B$ the desired condition translates to
\[\neg x_{A, B, 1} \wedge x_{A, B, 2} \wedge x_{A, B, 3} \wedge \neg x_{A, B, 4} \Rightarrow y_{0110}.\]
Using this approach we were able to determine that for every $n \in [5]$ 
we have $\bdim(\lat{n}) = n$, $\bdim(\lat{6}) = 5$, and $\bdim(\lat{7}) = 6$,
making the inequality in \cref{th:technical_upper_bound} tight for $n \leq 7$.
We were unable to ascertain if $\bdim(\lat{8}) \geq 7$ in reasonable time. 
We remark that the SAT solver was usually able to quickly find a solution if it exists
(e.g. finding the realizer for $\lat{6}$ takes less than one minute on a modern desktop 
despite the formula having over $10000$ variables).
The code used to generate the SAT formulas is available in a public repository
\footnote{\href{https://gitlab.com/MrOverlord/bdim-finder}{https://gitlab.com/MrOverlord/bdim-finder}}.

\begin{table}[]
    \scriptsize
    \sffamily
    \centering
    \begin{tabular}{|c|c|c|c|c|}
        \hline
        \rule[-1.5ex]{1pt}{0pt} $L_1$ \rule{0pt}{3ex} & $L_2$ & $L_3$ & $L_4$ & $L_5$ \\
        {[}\1\s\2\s\3\s\4\s\5\s\6{]} & {[}\1\s\2\s\3\s\4\s\5\s\6{]} & {[}\1\s\2\s\3\s\4\s\5\s\6{]} & {[}\1\s\2\s\3\s\4\s\5\s\6{]} & {[}\1\s\2\s\3\s\4\s\5\s\6{]} \\
        {[}\1\s\2\s\3\s\s\s\5\s\6{]} & {[}\1\s\2\s\3\s\4\s\5\s\s{]} & {[}\s\s\2\s\3\s\4\s\5\s\6{]} & {[}\1\s\s\s\3\s\4\s\5\s\6{]} & {[}\1\s\s\s\3\s\4\s\5\s\6{]}\\
        {[}\1\s\s\s\s\s\4\s\5\s\s{]} & {[}\1\s\s\s\3\s\4\s\5\s\s{]} & {[}\s\s\s\s\3\s\4\s\5\s\6{]} & {[}\1\s\s\s\3\s\4\s\s\s\6{]} & {[}\1\s\2\s\3\s\s\s\5\s\6{]}\\
        {[}\s\s\s\s\s\s\4\s\5\s\s{]} & {[}\s\s\2\s\3\s\4\s\5\s\6{]} & {[}\s\s\s\s\3\s\4\s\s\s\6{]} & {[}\1\s\s\s\3\s\s\s\5\s\6{]} & {[}\s\s\s\s\s\s\4\s\5\s\6{]}\\
        {[}\1\s\2\s\3\s\s\s\5\s\s{]} & {[}\s\s\2\s\3\s\4\s\5\s\s{]} & {[}\s\s\s\s\s\s\4\s\s\s\s{]} & {[}\1\s\s\s\3\s\s\s\s\s\6{]} & {[}\s\s\s\s\s\s\4\s\s\s\6{]}\\
        {[}\1\s\s\s\3\s\s\s\5\s\s{]} & {[}\1\s\2\s\3\s\4\s\s\s\s{]} & {[}\1\s\2\s\3\s\s\s\5\s\6{]} & {[}\s\s\s\s\3\s\s\s\s\s\6{]} & {[}\1\s\2\s\3\s\4\s\5\s\s{]}\\
        {[}\s\s\s\s\3\s\s\s\5\s\s{]} & {[}\1\s\s\s\3\s\4\s\5\s\6{]} & {[}\s\s\2\s\3\s\s\s\5\s\6{]} & {[}\1\s\2\s\s\s\4\s\5\s\6{]} & {[}\1\s\s\s\3\s\4\s\5\s\s{]}\\
        {[}\1\s\2\s\s\s\s\s\5\s\6{]} & {[}\1\s\2\s\3\s\4\s\s\s\6{]} & {[}\s\s\s\s\3\s\s\s\5\s\6{]} & {[}\1\s\s\s\s\s\4\s\5\s\6{]} & {[}\1\s\s\s\3\s\s\s\5\s\6{]}\\
        {[}\1\s\s\s\s\s\s\s\5\s\s{]} & {[}\1\s\s\s\3\s\4\s\s\s\6{]} & {[}\1\s\2\s\3\s\s\s\s\s\6{]} & {[}\1\s\2\s\s\s\s\s\5\s\6{]} & {[}\1\s\s\s\3\s\s\s\5\s\s{]}\\
        {[}\s\s\2\s\3\s\s\s\5\s\6{]} & {[}\1\s\s\s\3\s\4\s\s\s\s{]} & {[}\s\s\2\s\3\s\s\s\s\s\6{]} & {[}\1\s\s\s\s\s\s\s\5\s\6{]} & {[}\s\s\s\s\3\s\s\s\5\s\6{]}\\
        {[}\s\s\2\s\s\s\s\s\5\s\6{]} & {[}\1\s\s\s\3\s\s\s\s\s\s{]} & {[}\1\s\2\s\s\s\4\s\5\s\6{]} & {[}\1\s\2\s\s\s\4\s\s\s\6{]} & {[}\1\s\s\s\s\s\4\s\5\s\6{]}\\
        {[}\s\s\2\s\3\s\s\s\5\s\s{]} & {[}\s\s\2\s\3\s\4\s\s\s\6{]} & {[}\1\s\2\s\s\s\s\s\5\s\6{]} & {[}\1\s\s\s\s\s\4\s\s\s\6{]} & {[}\1\s\s\s\s\s\s\s\5\s\6{]}\\
        {[}\1\s\2\s\s\s\4\s\5\s\s{]} & {[}\s\s\2\s\3\s\4\s\s\s\s{]} & {[}\s\s\2\s\s\s\4\s\5\s\6{]} & {[}\1\s\s\s\s\s\s\s\s\s\6{]} & {[}\s\s\s\s\s\s\s\s\5\s\6{]}\\
        {[}\s\s\2\s\s\s\4\s\5\s\s{]} & {[}\s\s\2\s\3\s\s\s\s\s\s{]} & {[}\s\s\2\s\s\s\s\s\5\s\6{]} & {[}\1\s\2\s\3\s\4\s\5\s\s{]} & {[}\1\s\2\s\3\s\4\s\s\s\6{]}\\
        {[}\1\s\2\s\s\s\s\s\5\s\s{]} & {[}\s\s\s\s\3\s\4\s\5\s\6{]} & {[}\1\s\2\s\3\s\4\s\s\s\6{]} & {[}\1\s\2\s\s\s\s\s\5\s\s{]} & {[}\1\s\2\s\3\s\s\s\s\s\6{]}\\
        {[}\s\s\s\s\3\s\4\s\5\s\s{]} & {[}\s\s\s\s\3\s\4\s\s\s\6{]} & {[}\s\s\s\s\s\s\s\s\5\s\6{]} & {[}\1\s\s\s\3\s\4\s\5\s\s{]} & {[}\1\s\2\s\s\s\s\s\s\s\s{]}\\
        {[}\s\s\2\s\s\s\s\s\5\s\s{]} & {[}\s\s\s\s\3\s\4\s\s\s\s{]} & {[}\1\s\2\s\s\s\4\s\s\s\6{]} & {[}\1\s\2\s\s\s\4\s\5\s\s{]} & {[}\1\s\s\s\3\s\4\s\s\s\6{]}\\
        {[}\s\s\s\s\s\s\s\s\5\s\s{]} & {[}\1\s\2\s\3\s\s\s\s\s\s{]} & {[}\s\s\2\s\3\s\4\s\s\s\6{]} & {[}\1\s\s\s\s\s\4\s\5\s\s{]} & {[}\1\s\s\s\s\s\4\s\s\s\6{]}\\
        {[}\1\s\2\s\3\s\4\s\s\s\6{]} & {[}\s\s\s\s\3\s\s\s\s\s\s{]} & {[}\s\s\s\s\3\s\s\s\s\s\6{]} & {[}\1\s\s\s\s\s\s\s\5\s\s{]} & {[}\1\s\s\s\3\s\s\s\s\s\6{]}\\
        {[}\1\s\2\s\s\s\4\s\s\s\6{]} & {[}\1\s\2\s\s\s\4\s\5\s\6{]} & {[}\s\s\2\s\s\s\4\s\s\s\6{]} & {[}\1\s\2\s\s\s\4\s\s\s\s{]} & {[}\1\s\2\s\s\s\s\s\s\s\6{]}\\
        {[}\1\s\2\s\3\s\4\s\s\s\s{]} & {[}\s\s\2\s\s\s\4\s\5\s\6{]} & {[}\1\s\2\s\s\s\s\s\s\s\6{]} & {[}\1\s\s\s\s\s\s\s\s\s\s{]} & {[}\1\s\s\s\s\s\s\s\s\s\6{]}\\
        {[}\1\s\2\s\s\s\4\s\s\s\s{]} & {[}\1\s\s\s\s\s\4\s\5\s\6{]} & {[}\s\s\2\s\s\s\s\s\s\s\6{]} & {[}\s\s\2\s\3\s\4\s\5\s\6{]} & {[}\s\s\s\s\s\s\s\s\s\s\6{]}\\
        {[}\1\s\2\s\3\s\s\s\s\s\6{]} & {[}\s\s\s\s\s\s\4\s\5\s\6{]} & {[}\s\s\2\s\3\s\4\s\5\s\s{]} & {[}\1\s\s\s\s\s\4\s\s\s\s{]} & {[}\1\s\2\s\3\s\4\s\s\s\s{]}\\
        {[}\1\s\2\s\3\s\s\s\s\s\s{]} & {[}\1\s\2\s\s\s\4\s\s\s\6{]} & {[}\1\s\s\s\3\s\s\s\5\s\6{]} & {[}\s\s\2\s\s\s\4\s\5\s\6{]} & {[}\1\s\s\s\3\s\4\s\s\s\s{]}\\
        {[}\1\s\2\s\s\s\4\s\5\s\6{]} & {[}\1\s\s\s\s\s\4\s\s\s\6{]} & {[}\1\s\s\s\3\s\s\s\s\s\6{]} & {[}\s\s\2\s\3\s\4\s\5\s\s{]} & {[}\1\s\s\s\s\s\4\s\s\s\s{]}\\
        {[}\1\s\2\s\s\s\s\s\s\s\6{]} & {[}\s\s\2\s\s\s\4\s\s\s\6{]} & {[}\s\s\s\s\s\s\s\s\s\s\6{]} & {[}\s\s\2\s\s\s\4\s\5\s\s{]} & {[}\1\s\2\s\3\s\s\s\5\s\s{]}\\
        {[}\1\s\2\s\s\s\s\s\s\s\s{]} & {[}\s\s\s\s\s\s\4\s\s\s\6{]} & {[}\1\s\2\s\3\s\4\s\5\s\s{]} & {[}\s\s\2\s\s\s\4\s\s\s\6{]} & {[}\1\s\2\s\3\s\s\s\s\s\s{]}\\
        {[}\s\s\2\s\3\s\4\s\s\s\6{]} & {[}\s\s\s\s\3\s\4\s\5\s\s{]} & {[}\1\s\2\s\3\s\4\s\s\s\s{]} & {[}\s\s\2\s\s\s\4\s\s\s\s{]} & {[}\1\s\s\s\3\s\s\s\s\s\s{]}\\
        {[}\s\s\2\s\3\s\s\s\s\s\6{]} & {[}\1\s\2\s\s\s\4\s\5\s\s{]} & {[}\s\s\2\s\3\s\4\s\s\s\s{]} & {[}\s\s\2\s\s\s\s\s\5\s\6{]} & {[}\s\s\2\s\3\s\4\s\5\s\s{]}\\
        {[}\s\s\2\s\s\s\4\s\5\s\6{]} & {[}\1\s\2\s\s\s\4\s\s\s\s{]} & {[}\1\s\2\s\3\s\s\s\5\s\s{]} & {[}\s\s\2\s\s\s\s\s\5\s\s{]} & {[}\1\s\s\s\s\s\s\s\s\s\s{]}\\
        {[}\s\s\2\s\3\s\4\s\s\s\s{]} & {[}\s\s\2\s\s\s\4\s\5\s\s{]} & {[}\1\s\2\s\s\s\4\s\5\s\s{]} & {[}\s\s\2\s\s\s\s\s\s\s\6{]} & {[}\s\s\2\s\3\s\4\s\5\s\6{]}\\
        {[}\s\s\2\s\3\s\s\s\s\s\s{]} & {[}\s\s\2\s\s\s\4\s\s\s\s{]} & {[}\1\s\2\s\s\s\4\s\s\s\s{]} & {[}\s\s\2\s\s\s\s\s\s\s\s{]} & {[}\s\s\2\s\3\s\4\s\s\s\6{]}\\
        {[}\s\s\2\s\s\s\4\s\s\s\6{]} & {[}\1\s\s\s\s\s\4\s\5\s\s{]} & {[}\s\s\2\s\s\s\4\s\s\s\s{]} & {[}\s\s\s\s\3\s\4\s\5\s\6{]} & {[}\s\s\2\s\3\s\4\s\s\s\s{]}\\
        {[}\s\s\2\s\s\s\4\s\s\s\s{]} & {[}\s\s\s\s\s\s\4\s\5\s\s{]} & {[}\1\s\2\s\s\s\s\s\5\s\s{]} & {[}\s\s\s\s\3\s\4\s\5\s\s{]} & {[}\s\s\2\s\3\s\s\s\5\s\6{]}\\
        {[}\s\s\2\s\s\s\s\s\s\s\6{]} & {[}\1\s\s\s\s\s\4\s\s\s\s{]} & {[}\1\s\2\s\3\s\s\s\s\s\s{]} & {[}\s\s\s\s\3\s\4\s\s\s\s{]} & {[}\s\s\2\s\3\s\s\s\s\s\6{]}\\
        {[}\s\s\2\s\s\s\s\s\s\s\s{]} & {[}\1\s\2\s\3\s\s\s\5\s\6{]} & {[}\1\s\2\s\s\s\s\s\s\s\s{]} & {[}\s\s\s\s\s\s\4\s\5\s\6{]} & {[}\s\s\2\s\3\s\s\s\5\s\s{]}\\
        {[}\1\s\s\s\3\s\4\s\5\s\6{]} & {[}\s\s\s\s\s\s\4\s\s\s\s{]} & {[}\s\s\2\s\s\s\4\s\5\s\s{]} & {[}\s\s\2\s\3\s\s\s\5\s\6{]} & {[}\s\s\s\s\3\s\4\s\5\s\6{]}\\
        {[}\1\s\s\s\3\s\4\s\s\s\s{]} & {[}\1\s\2\s\3\s\s\s\s\s\6{]} & {[}\s\s\2\s\3\s\s\s\5\s\s{]} & {[}\1\s\2\s\3\s\s\s\5\s\s{]} & {[}\s\s\s\s\3\s\4\s\5\s\s{]}\\
        {[}\s\s\s\s\3\s\4\s\5\s\6{]} & {[}\s\s\2\s\3\s\s\s\5\s\s{]} & {[}\s\s\2\s\s\s\s\s\5\s\s{]} & {[}\s\s\s\s\3\s\s\s\5\s\6{]} & {[}\s\s\s\s\3\s\4\s\s\s\6{]}\\
        {[}\1\s\s\s\3\s\4\s\s\s\6{]} & {[}\1\s\2\s\s\s\s\s\5\s\6{]} & {[}\s\s\2\s\3\s\s\s\s\s\s{]} & {[}\s\s\2\s\3\s\s\s\5\s\s{]} & {[}\s\s\s\s\3\s\4\s\s\s\s{]}\\
        {[}\s\s\s\s\3\s\4\s\s\s\6{]} & {[}\s\s\2\s\3\s\s\s\s\s\6{]} & {[}\s\s\2\s\s\s\s\s\s\s\s{]} & {[}\s\s\s\s\3\s\s\s\5\s\s{]} & {[}\s\s\2\s\3\s\s\s\s\s\s{]}\\
        {[}\s\s\s\s\3\s\4\s\s\s\s{]} & {[}\1\s\2\s\3\s\s\s\5\s\s{]} & {[}\1\s\s\s\3\s\4\s\5\s\6{]} & {[}\s\s\s\s\s\s\4\s\5\s\s{]} & {[}\s\s\s\s\3\s\s\s\s\s\6{]}\\
        {[}\1\s\s\s\s\s\4\s\5\s\6{]} & {[}\1\s\2\s\s\s\s\s\5\s\s{]} & {[}\1\s\s\s\3\s\4\s\5\s\s{]} & {[}\s\s\s\s\s\s\s\s\5\s\6{]} & {[}\s\s\s\s\3\s\s\s\5\s\s{]}\\
        {[}\1\s\s\s\s\s\4\s\s\s\6{]} & {[}\1\s\2\s\s\s\s\s\s\s\6{]} & {[}\s\s\s\s\3\s\4\s\5\s\s{]} & {[}\s\s\s\s\s\s\s\s\5\s\s{]} & {[}\s\s\s\s\3\s\s\s\s\s\s{]}\\
        {[}\1\s\s\s\s\s\4\s\s\s\s{]} & {[}\1\s\s\s\s\s\s\s\5\s\6{]} & {[}\1\s\s\s\3\s\s\s\5\s\s{]} & {[}\1\s\2\s\3\s\4\s\s\s\6{]} & {[}\1\s\2\s\s\s\4\s\5\s\6{]}\\
        {[}\1\s\s\s\3\s\s\s\5\s\6{]} & {[}\1\s\s\s\s\s\s\s\s\s\6{]} & {[}\s\s\s\s\3\s\s\s\5\s\s{]} & {[}\s\s\2\s\3\s\4\s\s\s\6{]} & {[}\s\s\2\s\s\s\4\s\5\s\6{]}\\
        {[}\1\s\s\s\3\s\s\s\s\s\6{]} & {[}\1\s\2\s\s\s\s\s\s\s\s{]} & {[}\1\s\s\s\s\s\s\s\5\s\6{]} & {[}\s\s\s\s\3\s\4\s\s\s\6{]} & {[}\s\s\2\s\s\s\s\s\s\s\6{]}\\
        {[}\1\s\s\s\3\s\s\s\s\s\s{]} & {[}\s\s\2\s\3\s\s\s\5\s\6{]} & {[}\s\s\s\s\3\s\s\s\s\s\s{]} & {[}\s\s\s\s\s\s\4\s\s\s\6{]} & {[}\1\s\2\s\s\s\4\s\5\s\s{]}\\
        {[}\s\s\2\s\3\s\4\s\5\s\6{]} & {[}\s\s\2\s\s\s\s\s\5\s\6{]} & {[}\1\s\s\s\3\s\s\s\s\s\s{]} & {[}\s\s\2\s\3\s\4\s\s\s\s{]} & {[}\s\s\2\s\s\s\4\s\5\s\s{]}\\
        {[}\s\s\s\s\3\s\s\s\5\s\6{]} & {[}\s\s\2\s\s\s\s\s\5\s\s{]} & {[}\1\s\s\s\s\s\4\s\5\s\6{]} & {[}\1\s\s\s\3\s\4\s\s\s\s{]} & {[}\1\s\2\s\s\s\s\s\5\s\s{]}\\
        {[}\s\s\s\s\3\s\s\s\s\s\6{]} & {[}\s\s\2\s\s\s\s\s\s\s\s{]} & {[}\s\s\s\s\s\s\4\s\5\s\6{]} & {[}\s\s\2\s\3\s\s\s\s\s\6{]} & {[}\1\s\s\s\s\s\s\s\5\s\s{]}\\
        {[}\s\s\s\s\3\s\s\s\s\s\s{]} & {[}\1\s\s\s\3\s\s\s\5\s\6{]} & {[}\1\s\s\s\s\s\4\s\5\s\s{]} & {[}\1\s\2\s\3\s\4\s\s\s\s{]} & {[}\s\s\2\s\s\s\s\s\5\s\6{]}\\
        {[}\1\s\s\s\s\s\s\s\5\s\6{]} & {[}\1\s\s\s\3\s\s\s\5\s\s{]} & {[}\1\s\s\s\s\s\s\s\5\s\s{]} & {[}\s\s\s\s\s\s\s\s\s\s\6{]} & {[}\s\s\2\s\s\s\s\s\5\s\s{]}\\
        {[}\1\s\s\s\s\s\s\s\s\s\6{]} & {[}\1\s\s\s\s\s\s\s\5\s\s{]} & {[}\s\s\s\s\s\s\4\s\5\s\s{]} & {[}\1\s\s\s\3\s\s\s\5\s\s{]} & {[}\s\s\s\s\s\s\4\s\5\s\s{]}\\
        {[}\1\s\s\s\s\s\s\s\s\s\s{]} & {[}\s\s\s\s\s\s\s\s\5\s\6{]} & {[}\s\s\s\s\s\s\s\s\5\s\s{]} & {[}\s\s\s\s\s\s\4\s\s\s\s{]} & {[}\s\s\s\s\s\s\s\s\5\s\s{]}\\
        {[}\s\s\s\s\s\s\4\s\5\s\6{]} & {[}\1\s\s\s\s\s\s\s\s\s\s{]} & {[}\1\s\s\s\3\s\4\s\s\s\6{]} & {[}\1\s\2\s\3\s\s\s\5\s\6{]} & {[}\s\s\2\s\s\s\4\s\s\s\6{]}\\
        {[}\s\s\s\s\s\s\4\s\s\s\6{]} & {[}\s\s\s\s\3\s\s\s\5\s\6{]} & {[}\1\s\s\s\s\s\4\s\s\s\6{]} & {[}\1\s\2\s\3\s\s\s\s\s\6{]} & {[}\s\s\2\s\s\s\s\s\s\s\s{]}\\
        {[}\s\s\s\s\s\s\s\s\5\s\6{]} & {[}\s\s\s\s\3\s\s\s\5\s\s{]} & {[}\1\s\s\s\s\s\s\s\s\s\6{]} & {[}\1\s\2\s\3\s\s\s\s\s\s{]} & {[}\1\s\s\s\s\s\4\s\5\s\s{]}\\
        {[}\s\s\s\s\s\s\s\s\s\s\6{]} & {[}\s\s\s\s\s\s\s\s\5\s\s{]} & {[}\s\s\s\s\s\s\4\s\s\s\6{]} & {[}\1\s\s\s\3\s\s\s\s\s\s{]} & {[}\1\s\2\s\s\s\4\s\s\s\6{]}\\
        {[}\1\s\2\s\3\s\4\s\5\s\s{]} & {[}\s\s\2\s\s\s\s\s\s\s\6{]} & {[}\1\s\s\s\3\s\4\s\s\s\s{]} & {[}\s\s\2\s\3\s\s\s\s\s\s{]} & {[}\1\s\2\s\s\s\4\s\s\s\s{]}\\
        {[}\s\s\2\s\3\s\4\s\5\s\s{]} & {[}\1\s\s\s\3\s\s\s\s\s\6{]} & {[}\s\s\s\s\3\s\4\s\s\s\s{]} & {[}\s\s\s\s\3\s\s\s\s\s\s{]} & {[}\s\s\2\s\s\s\4\s\s\s\s{]}\\
        {[}\1\s\s\s\3\s\4\s\5\s\s{]} & {[}\s\s\s\s\3\s\s\s\s\s\6{]} & {[}\1\s\s\s\s\s\4\s\s\s\s{]} & {[}\1\s\2\s\s\s\s\s\s\s\6{]} & {[}\1\s\2\s\s\s\s\s\5\s\6{]}\\
        {[}\s\s\s\s\s\s\4\s\s\s\s{]} & {[}\s\s\s\s\s\s\s\s\s\s\6{]} & {[}\1\s\s\s\s\s\s\s\s\s\s{]} & {[}\1\s\2\s\s\s\s\s\s\s\s{]} & {[}\s\s\s\s\s\s\4\s\s\s\s{]}\\
        \rule[-2.2ex]{0pt}{0pt} {[}\s\s\s\s\s\s\s\s\s\s\s{]} \rule{0pt}{0ex} & {[}\s\s\s\s\s\s\s\s\s\s\s{]} & {[}\s\s\s\s\s\s\s\s\s\s\s{]} & {[}\s\s\s\s\s\s\s\s\s\s\s{]} & {[}\s\s\s\s\s\s\s\s\s\s\s{]} \\
        \hline
    \end{tabular}
    
    \vspace{5mm}
    
    \caption{Linear orders $L_1, L_2, L_3, L_4, L_5$ on all subsets of $[6]$ forming the Boolean realizer of $\lat{6}$. Each column corresponds to one linear order. The greatest element in an order is the top one.}\label{tab:boolean_realizer}
    \label{tab:my_label}
\end{table}

\section{Posets of multisets}\label{sec:multisets}






In this section, we prove \cref{th:mlat_exact_bdim}, that is, for every positive integer $n$, $\bdim(\mlat{n}) = \dim(\mlat{n}) = n$.
We have $\bdim(\mlat{n}) \leq \dim(\mlat{n})$, and by \cref{prop:dim_of_lattices}, $\dim(\mlat{n}) = n$.
Therefore, in order to prove \cref{th:mlat_exact_bdim}, it suffices to show that $n \leq \bdim(\mlat{n})$.
To this end, we analyze a certain class of subposets of $\mlat{n}$. 

For all positive integers $n$ and $m$, we define $\mn{n}{m}$ to be the subposet of $\mlat{n}$ induced by all multisets such that every element of a multiset has multiplicity less than $m$. The number of elements in $\mn{n}{m}$ is equal to $m^n$.
Moreover, $\mn{n}{2}$ is isomorphic to~$\lat{n}$.

\begin{lemma}\label{lemma:mn_lower_bound}
    For all positive integers $n, m$, we have $\bdim(\mn{n}{m}) \geq \frac{n \log m}{\log(nm - n + 1)}$.
\end{lemma}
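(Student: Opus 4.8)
The plan is to lower-bound $\bdim(\mn{n}{m})$ by a counting argument, comparing the number of distinct "comparability patterns" that a Boolean realizer of size $d$ can produce against the number of patterns that $\mn{n}{m}$ genuinely requires. Suppose $((L_1,\dots,L_d),\phi)$ is a Boolean realizer of $\mn{n}{m}$. For a fixed element $x$, consider the map $y \mapsto ([x \leq_{L_i} y])_{i=1}^d \in \{0,1\}^d$; more usefully, fix $y$ as well and look at how pairs behave. The key quantitative handle is: in a single linear order $L$ on a ground set of size $N = m^n$, the number of pairs $(x,y)$ with $x$ below $y$ is $\binom{N}{2}$, but what matters is a finer bookkeeping of, for each element $x$, its rank in each $L_i$. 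I would encode each element $x$ by the vector $r(x) = (\operatorname{rank}_{L_1}(x),\dots,\operatorname{rank}_{L_d}(x))$; since each $L_i$ is a linear order, $r$ is injective, but that alone gives nothing. Instead the right statement is about incomparable pairs: if $x,y$ are incomparable in the poset, then $[x \leq_{L_i} y]$ must disagree with $[y \leq_{L_i} x]$ appropriately, and $\phi$ must output $0$ on both orderings of the pattern — so the pattern $([x \leq_{L_i} y])_i$ for an incomparable pair is "reversible to another 0-pattern," whereas for a comparable pair $x <_P y$ the pattern is all-consistent with every $L_i$ putting $x$ below $y$... no, that is false in general for Boolean realizers. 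The honest approach: count the number of \emph{distinct} down-sets (principal or not) that the realizer can distinguish.

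Concretely, I would argue as follows. Restrict attention to the "chain-product" structure: $\mn{n}{m} = (\mn{1}{m})^n$, a product of $n$ chains each of length $m$. In such a product, consider the antichain $A$ of all elements of "rank" (coordinate-sum) equal to some fixed value, or better, consider for each coordinate $i$ and each threshold $t \in \{1,\dots,m-1\}$ the "slicing" comparabilities. The cleanest count: the number of ordered pairs $(x,y)$ with $x <_P y$ and $y$ covering information — actually, the sharp tool here is that a Boolean realizer of size $d$ on an $N$-element poset can realize at most $2^{2d}$ "types" of a pair relative to... Let me instead commit to the standard bound. For each pair $\{x,y\}$, record the unordered pattern $\{([x \leq_{L_i} y])_i, ([y \leq_{L_i} x])_i\}$, which since $[y \leq_{L_i} x] = 1 - [x \leq_{L_i} y]$ (strictness in a linear order up to the diagonal) is determined by $([x \leq_{L_i} y])_i$ up to complementation. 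So there are at most $2^{d-1}$ unordered patterns, hence $\phi$ partitions pairs into "comparable-pattern" and "incomparable-pattern" classes, and two incomparable pairs with the same unordered pattern are indistinguishable — fine, that is consistent. The real constraint: for comparable pairs $x <_P y$, the ordered pattern $p(x,y) = ([x \leq_{L_i} y])_i$ has $\phi(p(x,y)) = 1$ and $\phi(\overline{p(x,y)}) = 0$. Now transitivity forces structure: if $x <_P y <_P z$ then $p(x,z)$ relates to $p(x,y)$ and $p(y,z)$ bitwise — coordinatewise, $[x \leq_{L_i} z] \geq [x \leq_{L_i} y] \wedge [y \leq_{L_i} z]$ is not automatic either. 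I will instead use the accepted argument: the number of linear extensions / the VC-type bound.

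The approach I will actually carry out: use the fact (proven via an extremal/Ramsey-free counting) that if $P$ embeds a product of $n$ chains each of size $m$ as $\mn{n}{m}$ does, then any Boolean realizer of size $d$ yields an injection from the $m^n$ elements into a set of size at most $(\text{something})^{d}$, forcing $m^n \leq f(n,m)^{d}$ and hence $d \geq \frac{n \log m}{\log f(n,m)}$ with $f(n,m) = nm - n + 1$. The quantity $nm - n + 1 = n(m-1)+1$ is exactly the number of elements at a given "rank level" bound, or the number of distinct values of the coordinate-sum restricted appropriately — more precisely it is the number of possible values taken by a single "coordinate-difference" statistic. So the key step is: for each pair of linear orders (or each single $L_i$), the relative order of the $m^n$ elements, when projected onto the product structure, can be summarized by a statistic taking at most $n(m-1)+1$ values per element, so each element gets a label in a set of size $(n(m-1)+1)^d$ — wait, I should say: each element $x$ is assigned the $d$-tuple of its "level-profiles," and distinct elements must get distinct tuples because otherwise a comparable pair would be confused with an incomparable one using $\phi$. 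Then $m^n \leq (nm-n+1)^{d}$, giving the bound. \textbf{The main obstacle} I anticipate is making precise \emph{why} the per-order, per-element statistic can be taken to have only $nm-n+1$ values while still guaranteeing injectivity of the combined label — this requires exploiting the product-of-chains structure (an element of $\mn{n}{m}$ is a vector in $\{0,\dots,m-1\}^n$) together with the monotonicity of each $L_i$ on each of the $n$ chain-directions, and checking that two vectors with equal labels in all $d$ coordinates would have to be both comparable and incomparable in a way $\phi$ cannot separate. I expect to handle this by choosing, within each $L_i$, a canonical witness coordinate and tracking the coordinate-sum of $x$ minus the number of "inversions forced by $L_i$," bounding the range by $n(m-1)+1$, and then invoking a short pigeonhole plus the defining property of $\phi$ to finish.
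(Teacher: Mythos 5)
There is a genuine gap, and it is precisely the step you flag as ``the main obstacle.'' You correctly guess the target inequality $m^n \le (nm-n+1)^d$ and the overall shape (assign each element a $d$-tuple of statistics, one per linear order, each taking at most $nm-n+1$ values; show the assignment is injective; finish by pigeonhole), but you never identify what the statistic is or why injectivity holds, and your tentative candidate (``coordinate-sum of $x$ minus the number of inversions forced by $L_i$,'' with a ``canonical witness coordinate'') is not a construction that delivers either the range bound or the injectivity. The missing idea is a \emph{distinguishing set}: let $\cgS$ be the set of multisets in $\mn{n}{m}$ supported on a single element of $[n]$ (one element with multiplicity in $\{1,\dots,m-1\}$), so $|\cgS| = n(m-1)$ — this, not a rank-level or coordinate-difference count, is where $nm-n+1$ comes from. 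The statistic is $s_i(A) = $ the number of members of $\cgS$ lying below $A$ in $L_i$, which takes values in $\{0,1,\dots,n(m-1)\}$.

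Injectivity then follows from two observations your sketch does not supply. First, within a single linear order the family of initial segments is nested, so if $s_i(A) = s_i(B)$ then $A$ and $B$ lie above the \emph{same} subset of $\cgS$ in $L_i$; hence equal signatures force $[S \leq_{L_i} A]_{i=1}^d = [S \leq_{L_i} B]_{i=1}^d$ for every $S \in \cgS$, so the realizer's $\phi$ returns the same verdict on ``$S \le A$'' and ``$S \le B$.'' Second, $\cgS$ genuinely distinguishes elements of $\mn{n}{m}$: if $A \neq B$, pick $x \in [n]$ whose multiplicity in $A$ exceeds that in $B$ and let $T \in \cgS$ consist of $x$ with that multiplicity; then $T \le A$ but $T \not\le B$ in $\mn{n}{m}$, contradicting the previous sentence. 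Without an argument of this kind (equal counts $\Rightarrow$ equal comparability pattern against a set that separates all pairs of elements), the pigeonhole count $m^n \le (n(m-1)+1)^d$ is unsupported, so as written the proposal does not constitute a proof. Note also that several of the intermediate counting attempts you discard along the way (unordered patterns, transitivity of bit-patterns, VC-type bounds) are indeed dead ends, as you recognize; the distinguishing-set device is what replaces them.
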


\begin{proof}
    Let $n, m$ be positive integers and assume that $\bdim(\mn{n}{m}) = d$ for some positive integer~$d$. It follows that there exist linear orders $L_1, \dots, L_d$ and $\phi: \{0, 1 \}^{d} \rightarrow \{0, 1\}$ such that $((L_1, \dots, L_d), \phi)$ is a Boolean realizer of $\mn{n}{m}$.
    
    We define $\cgS$ to be the set of all multisets in $\mn{n}{m}$ consisting of exactly one element with a positive multiplicity. 
    For every multiset $A$ in $\mn{n}{m}$ and for every $i \in [d]$, we define $s_i(A)$ to be the number of elements in $\cgS$ that are less than $A$ in $L_i$. 
    Clearly, $0 \leq s_i(A) \leq |\cgS|$.
    Note that $|\cgS| = n(m-1)$.
    For every element $A$ in $\mn{n}{m}$, let $s(A) =(s_1(A), \dots, s_d(A))$ be its \emph{signature}. 
    We claim that all elements of $\mn{n}{m}$ have distinct signatures. 
    See \cref{fig:2} for an illustration of the following argument.
    
    Suppose, contrary to our claim that there exist distinct $A, B$ in $\mn{n}{m}$ with the same signatures, that is, $s(A) = s(B)$. 
    Fix some $i \in [d]$. 
    Since $s_i(A) = s_i(B)$ and $L_i$ is a fixed linear order, both $A$ and $B$ are greater than the exact same set of elements from $\cgS$ in $L_i$.
    In particular, for every $S \in \cgS$, 
    \begin{align*}\label{eq:sametuple}
        [S \leq_{L_i} A]_{i=1}^d = [S \leq_{L_i} B]_{i=i}^d.
    \end{align*}
    Since $A$ and $B$ are distinct, there is $x \in [n]$ that occurs in $A$ and $B$ with different multiplicities. 
    Without loss of generality, assume that the multiplicity of $x$ in $A$ is greater than the multiplicity of $x$ in $B$.
    Let $T$ be the multiset consisting of exactly $x$ with the multiplicity equal to the multiplicity of $x$ in $A$.
    Note that $T \in \cgS$, and therefore, $ [T \leq_{L_i} A]_{i=1}^d = [T \leq_{L_i} B]_{i=i}^d$, which yields, $T \leq A$ in $\mn{n}{m}$ if and only if $T \leq B$ in $\mn{n}{m}$, a contradiction.
    We conclude that elements in $\mn{n}{m}$ have distinct signatures. 
    Therefore,
        \[(|\cgS| + 1)^d \geq |\mn{n}{m}|.\]
    Since $|\cgS| = n(m-1)$ and $|\mn{n}{m}| = m^n$, we have $(n(m-1)+1)^d \geq m^n$, and finally, $d \geq \frac{n \log m}{\log(nm - n + 1)}$.
\end{proof}

    \begin{figure}[h]
        \centering
        \includegraphics[scale=1.2]{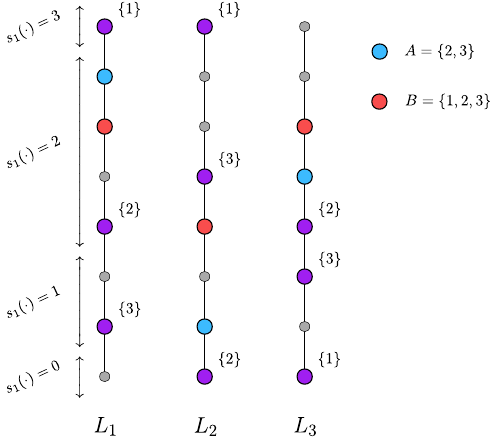}
        \caption{
        Consider the poset $\mn{3}{2} = \lat{3}$.
        $L_1, L_2, L_3$ are some linear orders on elements of the poset (in the figure the least element is on the bottom).
        The singletons $\cgS = \{\{1\},\{2\},\{3\}\}$ are highlighted with purple color.
        We fix $A = \{2,3\}$ (in blue) and $B = \{1,2,3\}$ (in red).
        We have $s_1(A) = 2$, since $A$ is greater than $\{2\}$ and $\{3\}$ and less than $\{1\}$.
        Similarly, one can check that $s_2(A) = 1$ and $s_3(A) = 3$.
        It follows that the signature of $A$ is equal to $(2,1,3)$.
        The signature of $B$ turns out to be the same.
        We claim that this prevents $L_1,L_2,L_3$ from being a Boolean realizer of $\lat{3}$ regardless of $\phi : \{0,1\}^{3} \rightarrow \{0,1\}$.
        Indeed, $1 \in B \backslash A$, however, the relations between $\{1\}$ and $A$ are the same as the relations between $\{1\}$ and $B$ in $L_1,L_2,L_3$. This is a contradiction since $\{1\} \leq B$ and $\{1\} \not\leq A$.
        }
        \label{fig:2}
    \end{figure}

Let us remark on a possible generalization of \cref{lemma:mn_lower_bound}.
A subset $D$ of elements of a poset $P$ is \emph{distinguishing for $P$} if for every two distinct elements $x,y$ in $P$ there is $z \in D$ such that the relation between $z$ and $x$ is different from the relation between $z$ and $y$.
The set $\cgS$ in the proof above is distinguishing for the poset $\mlat{n,m}$ and it is actually the only property that we use.
Therefore, \Cref{lemma:mn_lower_bound} can be easily generalized to the following statement: if a poset $P$ contains a distinguishing set $D$, then $\bdim(P) \geq \frac{\log(|P|)}{\log(|D|+1)}$.



\cref{lemma:mn_lower_bound} applied with $m = 2$ gives a lower bound on $\bdim(\lat{n})$.

\begin{corollary}\label{cor:lat_bdim_lower_bound}
    For every positive integer $n$, we have $\bdim(\lat{n}) \geq \frac{n}{\log(n + 1)}$.
\end{corollary}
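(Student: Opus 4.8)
The plan is to derive Corollary~\ref{cor:lat_bdim_lower_bound} as an immediate specialization of Lemma~\ref{lemma:mn_lower_bound}. First I would recall that, as noted just before the statement of the lemma, $\mn{n}{2}$ is isomorphic to $\lat{n}$; hence $\bdim(\lat{n}) = \bdim(\mn{n}{2})$. Then I would simply substitute $m = 2$ into the bound $\bdim(\mn{n}{m}) \geq \frac{n \log m}{\log(nm - n + 1)}$ provided by Lemma~\ref{lemma:mn_lower_bound}.

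Carrying out the substitution: with $m = 2$ the numerator becomes $n \log 2 = n$ (since $\log$ is base $2$), and the denominator becomes $\log(2n - n + 1) = \log(n+1)$. Therefore $\bdim(\lat{n}) = \bdim(\mn{n}{2}) \geq \frac{n}{\log(n+1)}$, which is exactly the claimed inequality.

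There is essentially no obstacle here: the only things to check are the arithmetic $2n - n + 1 = n + 1$ and the convention (stated in Section~\ref{sec:definitions}) that $\log$ denotes the base-$2$ logarithm, so that $\log 2 = 1$. One should also note the edge case $n = 1$, where $\lat{1}$ has two elements, $\log(n+1) = 1$, and the bound reads $\bdim(\lat{1}) \geq 1$, which indeed holds. For completeness I would remark that Lemma~\ref{lemma:mn_lower_bound} was stated for all positive integers $n$ and $m$, so applying it with $m = 2$ requires no additional hypotheses.

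\begin{proof}
    Recall that $\mn{n}{2}$ is isomorphic to $\lat{n}$, so $\bdim(\lat{n}) = \bdim(\mn{n}{2})$. Applying \cref{lemma:mn_lower_bound} with $m = 2$, and using that $\log$ has base $2$ so that $\log 2 = 1$, we obtain
    \[
        \bdim(\lat{n}) = \bdim(\mn{n}{2}) \geq \frac{n \log 2}{\log(2n - n + 1)} = \frac{n}{\log(n+1)}. \qedhere
    \]
\end{proof}
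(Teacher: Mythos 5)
Your proposal is correct and follows exactly the paper's route: the corollary is obtained by applying \cref{lemma:mn_lower_bound} with $m=2$ and using that $\mn{n}{2}$ is isomorphic to $\lat{n}$, with $\log 2 = 1$ in base $2$. Nothing is missing.
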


For every positive integer $n$, the limit $\lim_{m \to \infty} \frac{n \log m}{\log(nm - n + 1)}$ is equal to $n$. It follows that $\bdim(\mn{n}{m}) = n$ for a large enough $m$, and so, $\bdim(\mlat{n}) = n$, which concludes the proof of \cref{th:mlat_exact_bdim}. 
To be more precise, one can compute how large $m$ should be.

\begin{proposition}\label{prop:bdim_mn_equal_to_n}
    For every integer $n$ with $n \geq 2$, we have $\bdim(\mn{n}{n^{n-1}}) = n$.
\end{proposition}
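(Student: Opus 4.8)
The plan is to combine \cref{lemma:mn_lower_bound} with the fact that the Boolean dimension of a poset with more than one element is a \emph{positive integer}, so that a lower bound strictly exceeding $n-1$ already forces the value to be at least $n$. Recall that \cref{lemma:mn_lower_bound} gives $\bdim(\mn{n}{m}) \geq \frac{n\log m}{\log(nm - n + 1)}$ for all positive integers $n,m$. First I would note that for $n \geq 2$ and $m \geq 2$ we have $nm - n + 1 > m$, hence this ratio is always strictly smaller than $n$; thus the bound can never by itself push $\bdim$ all the way up to $n$, and the only way to extract the exact value $n$ is to make the ratio exceed $n-1$ and then round up. This is exactly where the choice $m = n^{n-1}$ comes in.

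Concretely, I would substitute $m = n^{n-1}$ into the estimate of \cref{lemma:mn_lower_bound}, obtaining
\[
\bdim\bigl(\mn{n}{n^{n-1}}\bigr) \;\geq\; \frac{n\log\!\left(n^{n-1}\right)}{\log\!\left(n\cdot n^{n-1} - n + 1\right)} \;=\; \frac{n(n-1)\log n}{\log\!\left(n^{n} - n + 1\right)},
\]
and then show the right-hand side is strictly greater than $n-1$. Dividing through by $n-1 > 0$, this reduces to $n\log n > \log(n^{n} - n + 1)$, i.e.\ to $\log(n^{n}) > \log(n^{n} - n + 1)$, which is equivalent to $n^{n} > n^{n} - n + 1$, and this holds precisely because $n \geq 2$. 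Since $\mn{n}{n^{n-1}}$ has $\left(n^{n-1}\right)^{n} \geq 4$ elements, its Boolean dimension is a positive integer; being strictly larger than $n-1$, it must be at least $n$, which is the claim.

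I do not expect a serious obstacle here — the argument is essentially a one-line arithmetic verification on top of \cref{lemma:mn_lower_bound}; the only points requiring a little care are the direction of the logarithmic inequality and the (mild) observation that $\mn{n}{n^{n-1}}$ genuinely has more than one element, so that the integrality/rounding step is legitimate. As an aside, I would also remark that the reverse inequality is immediate: $\mn{n}{n^{n-1}} \subseteq \mlat{n}$, so by monotonicity of Boolean dimension under subposets and $\dim(\mlat{n}) = n$ from \cref{prop:dim_of_lattices} we get $\bdim(\mn{n}{n^{n-1}}) \leq \bdim(\mlat{n}) \leq n$, giving equality and furnishing the explicit value of $m$ promised in the discussion around \cref{th:mlat_exact_bdim}.
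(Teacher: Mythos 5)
Your proof is correct and follows exactly the route the paper intends: plug $m = n^{n-1}$ into \cref{lemma:mn_lower_bound}, check that the resulting bound exceeds $n-1$ (which reduces to $n^n > n^n - n + 1$), and conclude by integrality of the Boolean dimension. The arithmetic and the integrality step are both sound, so nothing is missing.
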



    



\section{Open problems}\label{sec:open_problems}

To sum up, we list a few related open problems. 
By \cref{cor:lat_bdim_lower_bound} and by \cref{th:technical_upper_bound}, for every positive integer $n$ large enough, we have $\frac{n}{\log(n+1)} \leq \bdim(\lat{n}) \leq \left\lceil\frac{5}{6} n\right\rceil$.
This is not tight, and therefore, we state the following question.

\begin{myquestion}\label{q:bdim_lat}
    What is the order of magnitude of $\bdim(\lat{n})$?
\end{myquestion}

A proper examination of the proof of \cref{lemma:mn_lower_bound} in the case of $m = 2$ (recall that $\mn{n}{2}$ is isomorphic to $\lat{n}$) shows that we never use the fact that a fixed Boolean realizer detects comparabilities between $A,B \subset [n]$ with $|A|,|B| > 1$. 
In fact, to show the lower bound, we utilize only the detection of comparabilities between singletons and other subsets of $[n]$.
Moreover, the singletons play a special role since they form the smallest distinguishing set in $\lat{n}$.
This motivates us to consider a relaxed version of the Boolean dimension problem for Boolean lattices. Namely, let $\bdim(1,\lat{n})$ be the minimum positive integer $d$ such that there exists a sequence of linear orders $( L_1, \dots, L_d )$ of elements of $\lat{n}$ and $\phi: \{0, 1\}^{d} \rightarrow \{ 0, 1 \}$ such that for every $x \in [n]$ and $A \in \lat{n}$, we have $\{x\} \leq A$ in $\lat{n}$ if and only if $\phi \left([\{x\} \leq_{L_i} A]_{i = 1}^d \right) = 1$.
Note that such relaxation was considered for dimension -- see e.g.\ \cite{BKKT94, K96, K99}.
Clearly, $\bdim(1,\lat{n}) \leq \bdim(\lat{n})$ and due to the discussion above, $\frac{n}{\log(n+1)} \leq \bdim(1,\lat{n})$.
However, even though bounding $\bdim(1,\lat{n})$ is simpler than bounding $\bdim(\lat{n})$, there is no known better upper bound.

\begin{myquestion}\label{q:bdim_1_n}
    What is the order of magnitude of $\bdim(1,\lat{n})$? Is it the same as the order of magnitude of $\bdim(\lat{n})$?
\end{myquestion}

 The last question is related to the finite subposets of $\mlat{n}$. This question was already stated in \mbox{\cite[Section~3.4]{BPSTT19}}.

\begin{myquestion}\label{q:mn}
    For a positive integer $n$, let $f(n)$ be the least positive integer $m$ with $\bdim(\mn{n}{m}) = n$.
    What is the order of magnitude of $f(n)$?
\end{myquestion}

 By \cref{prop:bdim_mn_equal_to_n} and \cref{th:upper_bound}, for an integer $n$ large enough, we have $3 \leq f(n) \leq n^{n-1}$.
 This leaves a substantial gap for further research.

\section*{Acknowledgments}
We thank the anonymous reviewers and George Bergman for their valuable feedback, which helped improve this paper.

\bibliographystyle{abbrv}
\bibliography{booleandimension}

\newpage

\appendix

\section{The code to verify \cref{lem:bdim_of_B_6}}\label{sec:code}
We provide a short Python code that verifies whether the five linear orders in \cref{tab:boolean_realizer} and the function $\phi$ defined in the proof of \cref{lem:bdim_of_B_6} form a Boolean realizer of $\lat{6}$.
Beneath the code, we give the formatted linear orders so that they can be directly inputted into the script for verification.

\vspace{2mm}

\scriptsize

\begin{tcolorbox}[
    colback=white, 
    colframe=gray,
    sharpish corners
]
\begin{lstlisting}[language=Python]
import numpy as np

def phi(orders, A, B):
    t = np.array([], dtype=int)
    for order in orders:
        t = np.append(t, [1 if order.index(A) <= order.index(B) else 0])
    return (t == 0).sum() <= 1;

def is_subset(A, B):
    return (A | B) == B

def verify(orders):
    for A in range(1<<6):
        for B in range(1<<6):
            if (phi(orders, A, B) != is_subset(A, B)):
                raise Exception("Provided set of orders is not a Boolean realizer of B_6.")

orders = []
for i in range(5):
  orders.append(list(map(int, input().split())))

verify(orders)
\end{lstlisting}

\tcblower

Attached file: \textattachfile[color=blue]{verify.py}{verify.py}

\end{tcolorbox}

\vspace{2mm}

\normalsize 

The input consists of $5$ lines. Each represents one of the linear orders in \cref{tab:boolean_realizer}. The subsets of $[6]$ are converted to corresponding decimal integers based on their binary representations. For example, the number $13$ corresponds to the set $\{ 1, 3, 4 \}$, since the binary representation of $13$ on $6$ bits is $001101_2$.

\scriptsize

\vspace{4mm}

\begin{tcolorbox}[
    colback=white, 
    colframe=gray,
    sharpish corners
]
\begin{lstlisting}[language=Python]
0 8 29 30 31 32 48 40 56 1 33 49 4 36 52 62 5 37 53 9 41 57 12 44 45 60 13 61 2 34 10 42 6 14 58 38 46 3 35 59 7 39 11 15 43 47 16 18 28 19 26 27 22 50 54 17 51 20 21 23 24 25 55 63

0 32 36 37 34 16 20 52 1 48 17 21 53 2 18 50 54 3 33 49 35 19 23 38 51 22 39 8 55 9 24 25 10 26 11 27 28 40 42 41 43 56 57 58 59 4 7 12 44 60 6 14 46 5 13 45 47 61 15 30 62 29 31 63

0 1 9 12 13 40 33 41 45 16 24 17 25 56 57 5 4 49 20 21 28 29 61 2 6 18 22 26 3 7 19 10 11 27 23 14 15 31 32 37 53 30 34 35 42 36 46 43 48 47 50 58 51 59 38 39 52 54 55 8 44 60 62 63

0 3 35 4 6 5 7 39 55 8 21 32 15 38 13 14 40 44 46 47 16 48 24 20 22 52 23 54 56 12 28 60 2 34 18 50 10 42 26 30 58 9 62 1 11 17 25 27 29 19 31 33 41 43 49 51 57 59 36 37 53 45 61 63

0 8 51 10 11 43 25 2 42 16 24 18 50 17 19 26 27 34 58 59 4 20 36 6 12 44 28 60 22 38 54 14 46 62 1 30 5 7 23 9 13 15 32 33 35 37 41 45 3 39 47 48 49 57 52 21 53 29 31 40 56 55 61 63 
\end{lstlisting}

\tcblower

Attached file: \textattachfile[color=blue]{orders.in}{orders.in}

\end{tcolorbox}

\end{document}